\documentclass[12pt,reqno]{amsproc}

\usepackage[utf8]{inputenc}
\usepackage{mathpazo}
\usepackage{amsfonts}
\usepackage{amssymb}
\usepackage{amsmath}
\usepackage{amsthm}
\usepackage{mathrsfs}
\usepackage{bbm}
\usepackage{float}

\usepackage[margin=0.9in]{geometry}
\usepackage{graphicx}
\usepackage{tikz}
\usepackage{tikz-cd}
\usepackage[all,cmtip]{xy}

\usetikzlibrary{
    knots,
    decorations.markings
}

\tikzset{
    midarrow/.style={
        postaction=decorate,
        decoration={
            markings,
            mark=at position 0.5 with {\arrow{>}}
        }
    }
}

\usepackage{caption}
\usepackage{subcaption}

\usepackage{enumerate}
\usepackage{changes}
\usepackage{todonotes}
\usepackage{orcidlink}

\usepackage{hyperref}
\hypersetup{
    colorlinks=true,
    linkcolor=blue,
    citecolor=red,
    urlcolor=red,
    filecolor=blue
}

\usepackage[capitalise]{cleveref}

\newcommand{\x}{\times}
\newcommand{\g}{\Gamma}
\newcommand{\s}{\sigma}
\newcommand{\al}{\alpha}
\newcommand{\be}{\beta}
\newcommand{\ka}{\kappa}

\theoremstyle{plain}

\newtheorem{theorem}{Theorem}[section]
\newtheorem*{theorem*}{Theorem}
\newtheorem{lemma}[theorem]{Lemma}

\theoremstyle{definition}

\newtheorem*{example*}{Example}
\newtheorem{remark}[theorem]{Remark}

\title{Borel--Moore Homology of 2-Particle Unordered Configuration Spaces of Graphs via Discrete Morse Theory}
\author{Munawar Khan}

\begin{document}
\begin{abstract}

We define a discrete Morse function on the \emph{open cell complex}
\(C_2(\g)\), arising from the 2-particle unordered configuration space of a graph \(\g\). We compute the homology of the associated Morse complex. Using an isomorphism established by Knudson and Scoville between the Borel--Moore homology of \(C_2(\g)\) and the homology of the Morse complex, we conclude that the Borel-Moore homology groups of \(C_2(\g)\) are completely determined by the first Betti number of \(\g\).
\end{abstract}

\maketitle
\markboth{Munawar Khan}{}

\section*{Introduction}
Discrete Morse theory, as introduced by Forman \cite{Forman1998,Forman2002}, gives an analogue of classical Morse theory in discrete settings for cell complexes. This theory was widely developed and has since been applied in several areas, including \cite{KnudsonWang2022,JostZhang2024,Benedetti2012}. It has also been studied in the setting of \emph{open cell complexes} \cite{KnudsonScoville2026}.

In \cite{Sawicki2012}, see also \cite{Farley_2005}, Sawicki systematically defined a discrete Morse function on the \(2\)-particle combinatorial configuration space \(D^2(\g)\) for connected and simple graphs. We extend this idea to define a discrete Morse function on the open cell complex \(C_2(\g)\). In this paper, an \emph{open cell complex} is the complement of a nonempty subcomplex of a regular CW complex.
Since \(C_2(\g)\) and \(D^2(\g)\) are homotopy equivalent \cite{Sawicki2012,Abrams2000ConfigurationSpacesGraphBraidGroups,Prue_2014}, their singular homology groups are isomorphic.Thus, computing \(H_n(C_2(\g))\) reduces to defining a discrete Morse function on \(D^2(\g)\) and computing the homology of the resulting Morse complex.

However, Borel--Moore homology is not invariant under arbitrary homotopies and, in general, does not agree with singular homology for non-compact spaces\cite{zbMATH03189968}. Instead of \(D^2(\g)\), we define a discrete Morse function directly on \(C_2(\g)\) and use the result of \cite{KnudsonScoville2026}, which shows that the homology of the Morse complex associated to a discrete Morse function on an open cell complex is isomorphic to its Borel--Moore homology. We therefore apply discrete Morse theory for open cell complexes to \(C_2(\g)\), allowing us to compute its Borel--Moore homology combinatorially.
Most of this work follows the approach of \cite{Sawicki2012} and is a natural extension of this construction.

\textbf{Notation:} Throughout the article, \(\g\) will denote a connected, simple graph. Let \(\operatorname{Sym}^2(\g)\) be the symmetric space of \(\g\), defined as follows:
\[
\operatorname{Sym}^2(\g)=\left(\g\times\g\right)/\sim,
\]
where
\[
(x_1,y_1)\sim(x_2,y_2)\iff (x_1,y_1)=(y_2,x_2) \text{ or } (x_1,y_1)=(x_2,y_2).
\]
Let
\[
\Delta=\{(a,a)\mid a\in\g\}\subseteq\g\times\g
\]
denote the diagonal of \(\g\times\g\). We note that \(\operatorname{Sym}(\g)\) has a regular CW complex structure and \(\Delta\) is a nonempty subcomplex.
The \(2\)-particle unordered configuration space of \(\g\) is defined as
\[
C_2(\g)=\operatorname{Sym}^2(\g)-\Delta ,
\]

Whenever \(e=(i,j)\) is a \(1\)-cell in \(\g\), we introduce a \(1\)-cell along the diagonal with faces \(i\x i\) and \(j\x j\) and call it \(D(e)\) or \(D(i,j)\), as appropriate. We consider one triangular half of the \(2\)-cell \(e\x e\) and we call it \(C_2(e)\) or \(C_2(e(v))\), as appropriate. Here, we take the lower half, although equivalently one could take the upper half. The \(2\)-cell \(C_2(e)\) is shown in the following diagram:
\par\noindent
\begin{figure}[ht]
    \centering
    \begin{tikzpicture}
    
        \fill[gray!50] (0,0) -- (4,0) -- (4,4) -- cycle;
        
        \draw[thick] (0,0) -- (4,0) node[midway, below] {\small \(i\x e\)};
        \draw[thick] (4,0) -- (4,4) node[midway, right] {\small \(j\x e\)};
        \draw[thick] (4,4) -- (0,0) node[midway, sloped, above] {\small \(D(e)\)};
       
        \fill[black] (0,0) circle (3pt);
        \fill[black] (4,0) circle (3pt);
        \fill[black] (4,4) circle (3pt);
       
        \node[below left] at (0,0) {\small \(i\x i\)};
        \node[below right] at (4,0) {\small \(j\x i\)};
        \node[above right] at (4,4) {\small \(j\x j\)};
       
        \node at (2.5,1.25) {\small \(C_2(e)\)};
        
    \end{tikzpicture}
    \caption{
    The \(2\)-simplex \(C_2(e)\).}
    \label{fig:C2e}
\end{figure}
\par

We define a discrete Morse function \(\tilde f_2\) on \( \operatorname{Sym}^2(\g) \) such that all cells in \( \Delta \) are critical. This gives a discrete Morse function on the open cell complex \( C_{2}(\g) \)\cite{KnudsonScoville2026}. Then we compute the Morse complex \(\mathcal{M}(f)\) for this function and compute Borel--Moore homology using Theorem \ref{Thm:knudson}.

\section{Preliminaries}
In this section, we briefly introduce the elements of discrete Morse theory that will be used throughout the article.
We skip proofs and details that are not directly needed and refer interested readers to the following sources \cite{Forman1998,Scoville2019,KnudsonScoville2026}.
\subsection{Discrete Morse Theory for Regular CW Complexes}
Let \(M\) be a regular CW complex. For cells \(\sigma,\tau\in M\), we write \(\tau>\sigma\) if \(\sigma\) is a face of \(\tau\), and \(\tau<\sigma\) if \(\tau\) is a face of \(\sigma\).
 A function \(f:M\to\mathbb{R}\) is called a \emph{discrete Morse function} if, for every \(p\)-cell \(\sigma\), the following inequalities hold:
\begin{equation}
\label{eq:DMF_definition_inequality1}
\#\{\tau^{(p+1)}>\sigma:f(\sigma)\geq f(\tau)\}\leq 1,
\end{equation}
and
\begin{equation}
\label{eq:DMT_definition_inequality2}
\#\{v^{(p-1)}<\sigma:f(v)\geq f(\sigma)\}\leq 1.
\end{equation}

A cell \(\sigma^{(p)}\) is called a \emph{critical \(p\)-cell} if both inequalities \eqref{eq:DMF_definition_inequality1} and \eqref{eq:DMT_definition_inequality2} are strict.

Here we briefly recall the following standard facts from discrete Morse theory \cite{Forman1998}. For every non-critical cell \(\sigma\), exactly one of the following holds:
\begin{equation}
\label{exclusive condition 1}
\exists\,\tau^{(p+1)}>\sigma \quad \text{such that}\quad f(\tau)\leq f(\sigma),
\end{equation}
or
\begin{equation}
\label{exclusive condition 2}
\exists\,v^{(p-1)}<\sigma \quad \text{such that}\quad f(v)\geq f(\sigma).
\end{equation}

Given $a\in\mathbb{R}$, the associated level subcomplex is
\[
M(a)=\bigcup_{\s\leq\tau,\ f(\tau)\leq a}\s \,\,.
\]
If an interval \( [a,b] \) does not contain any non-critical values, then
\[
M(b)\searrow M(a).
\]

If the interval $[a,b]$ contains exactly one critical cell $\s^{(p)}$,
then $M(b)$ is homotopy equivalent to the complex obtained from $M(a)$
by attaching a $p$-cell:
\[
M(b)\simeq M(a)\cup e^{(p)}.
\]

Consequently, if $m_p(f)$
denotes the number of critical $p$-cells, then $M$ is homotopy equivalent
to a CW complex with $m_p(f)$ cells of dimension $p$.

A discrete vector field $V$ on $M$ is a collection of pairs
\[
\{\s^{(p)}<\tau^{(p+1)}\},
\]
such that every cell appears in at most one pair. Pictorially, we draw an arrow with \(\sigma\) as its tail and \(\tau\) as its head. A $V$-path is a sequence
\[
\s_0^{(p)},\tau_0^{(p+1)},\s_1^{(p)},\tau_1^{(p+1)},\ldots,
\s_n^{(p)}
\]
where $\{\s_i,\tau_i\}\in V$ and $\tau_i>\s_{i+1}$.
A discrete Morse function determines a discrete vector field called an \emph{induced gradient vector field}, defined by the pairs
\[
\{\s^{(p)}<\tau^{(p+1)}\},\qquad f(\s)\geq f(\tau).
\]
Conversely, a discrete vector field is induced by a discrete Morse function if and only if it contains no closed $V$-paths

Before going further, we define the notion of the Morse complex. Let \(f\) be a discrete Morse function on an oriented regular CW complex \(M\). The corresponding Morse complex is defined as follows:
\(\mathcal{M}_p(f)\) is the free group generated by the critical cells of dimension \(p\) in \(M\). The Morse boundary map \(\partial_p:\mathcal{M}_p(f)\longrightarrow\mathcal{M}_{p-1}(f)\) is defined as follows:
Given a critical cell \(\tau^{(p)}\)
\[
\partial \,\tau = \sum_{\s^{(p-1)}}\left(\sum_{\tilde{\s}^{(p-1)}<\tau}{i_{\tilde{\s},\tau}\cdot \mu(\gamma(\tilde{\s},\s))}\right)\s .
\]
Where, the outer sum runs over all critical cells of dimension \(p-1\), the inner sum runs over all the faces \(\tilde{\s}\) of \(\tau\), the innermost sum runs over all gradient upper paths \(\gamma(\tilde{\s},\s)\) from \(\tilde{\s}\) to \(\s\). The term \(\mu(\gamma(\tilde{\s},\s))\) is a number called the multiplicity of the path \(\gamma(\tilde{\s},\s)\), determined based on the orientation and incidence numbers of the cells in the path. This number for a regular CW complex is always \(\pm 1\).

\subsection{Discrete Morse Theory for Open Complexes}

Given an open complex $K=X - T$, where $X$ is a simplicial
complex and $T$ is a nonempty subcomplex, discrete Morse theory
can be defined via a discrete gradient or a discrete Morse function.
However, the standard conditions for a discrete Morse function may not be
mutually exclusive on open complexes, requiring additional constraints to
resolve ambiguity in the induced discrete gradient
\cite{KnudsonScoville2026}.

Unlike the simplicial setting, discrete Morse theory on open complexes exhibits several anomalous behaviors \cite{KnudsonScoville2026}.
Local minima do not necessarily occur on $0$-cells, and adding a regular
cell can trigger topological changes. Furthermore, a single critical cell
may simultaneously create and destroy homology classes in multiple
dimensions, and a nonempty complex may lack critical cells entirely,
potentially leading to the conclusion that the space is homotopy
equivalent to an empty set
\cite{KnudsonScoville2026}. Because of these anomalies, the
homology of the Morse complex for an open complex is not isomorphic to its
singular homology; instead, it is isomorphic to its Borel--Moore homology
$H_*^{\mathrm{BM}}(K)$ \cite{KnudsonScoville2026}.
\begin{theorem}\cite{KnudsonScoville2026}
\label{Thm:knudson}
For all \(n \geq 0\), there is an isomorphism
\[
H_n(\mathcal{M}(f)) \cong H^{BM}_n(K).
\]
Where \(\mathcal{M}(f)\) is the Morse complex corresponding to a discrete Morse function \(f\) on \(K\).
\end{theorem}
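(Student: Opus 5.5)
This is the Knudson--Scoville theorem, which the paper cites rather than proves, so I only sketch how I would establish it. The plan is to compare the Morse complex with the cellular chain complex that computes Borel--Moore homology, in three steps.

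\textbf{Step 1: cellular model for Borel--Moore homology.} Write \(K = X - T\) with \(X\) a finite regular CW complex. Then \(H^{BM}_n(K) \cong H_n(X, T)\). This holds because \(X\) is the one-point compactification of \(K\) with \(T\) collapsed, \(T\) is a subcomplex and hence has a collar neighborhood, and Borel--Moore homology of a locally compact space is the reduced homology of its one-point compactification. Consequently \(H^{BM}_*(K)\) is computed by the relative cellular chain complex \(C_*(X,T)\). This complex is free on the open cells of \(K\), and its boundary map is the cellular boundary of \(X\) with every face lying in \(T\) discarded.

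\textbf{Step 2: extend the gradient to \(X\).} A discrete Morse function \(f\) on \(K\), in the sense of Knudson--Scoville, has a gradient vector field \(V\) that pairs only cells of \(K\). This is consistent with how the paper itself proceeds, making all cells of \(\Delta\) critical. Extend \(V\) to \(X\) by declaring every cell of \(T\) critical. The result is still acyclic, since any \(V\)-path stays among the paired cells of \(K\). The gradient paths used in the Morse boundary of \(K\) are exactly the upper gradient paths in \(X\) that avoid \(T\). Terms that leave through a face in \(T\) are precisely the ones killed in the relative complex.

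\textbf{Step 3: relative Forman theorem.} Show that the Morse complex \(\mathcal{M}(f)\) is chain homotopy equivalent to \(C_*(X,T)\). The standard approach builds the flow map \(\Phi = 1 + \partial V + V\partial\) on \(C_*(X)\) and stabilizes it to \(\Phi^\infty\), which exists by acyclicity and finiteness. \(\Phi^\infty\) projects onto the \(\Phi\)-invariant chains, and these are isomorphic to \(\mathcal{M}(f)\), with the Morse boundary given by the path-sum formula stated above. Because \(V\) vanishes on \(T\) and \(\partial\) preserves \(C_*(T)\), all of these maps preserve the subcomplex \(C_*(T)\). They therefore descend to the quotient \(C_*(X)/C_*(T) = C_*(X,T)\) and give a chain equivalence there, with critical generators exactly the critical cells of \(K\). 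An alternative route is algebraic Morse theory, or homological perturbation applied to the Smith normal form pairing, done directly on \(C_*(X,T)\): each pair \((\sigma,\tau)\in V\) with \([\tau:\sigma]=\pm1\) allows a Gaussian elimination that preserves chain homotopy type.

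\textbf{Main obstacle.} The delicate point is well-definedness. The Morse function on the open complex has to determine an acyclic matching on the cells of \(K\) only, which is what the extra Knudson--Scoville constraints guarantee, since the usual exclusivity can fail. The incidences \([\tau:\sigma]=\pm1\) used in the eliminations come from regularity of \(X\). Once the matching is known to be acyclic and confined to \(K\), Step 3 is routine, and combining it with Step 1 gives \(H_n(\mathcal{M}(f)) \cong H^{BM}_n(K)\).
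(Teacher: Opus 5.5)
The paper does not prove this statement. It imports it from Knudson--Scoville and only adds that their Theorem~3.1 and Example~3.4 cover open complexes built from regular CW complexes. So there is no internal argument to match, and your sketch has to stand as a self-contained reconstruction, which it does.

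The cleanest way to finish your Step~3 is the one you half-state:
\begin{itemize}
\item Since $V$ vanishes on $C_*(T)$ and $\partial$ preserves $C_*(T)$, the flow $\Phi$ is the identity on $C_*(T)$. Hence $C_*(T)\subseteq C^{\Phi}_*(X)$.
\item Under Forman's restriction-to-critical-cells isomorphism with the Morse complex of the extended gradient on $X$, this subcomplex is just the span of the (all critical) cells of $T$.
\item The quotient is exactly the open Morse complex, because a gradient path that reaches a cell of $T$ must stop there.
\item The five lemma, applied to $0\to C_*(T)\to C^{\Phi}_*(X)\to C^{\Phi}_*(X)/C_*(T)\to 0$ mapping into $0\to C_*(T)\to C_*(X)\to C_*(X,T)\to 0$, then gives $H_*(\mathcal{M}(f))\cong H_*(X,T)$.
\end{itemize}
What your route buys over the paper's bare citation is that it works verbatim for regular CW complexes. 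That is the case the application actually needs, since $\operatorname{Sym}^2(\g)$ has square cells $e\x\tilde e$. You therefore do not have to rely on the remark that the simplicial result extends.

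Two small points of precision. First, it is $X/T$ that is the one-point compactification of $K$. What you need from $T$ is a regular (mapping-cylinder) neighbourhood that deformation retracts onto $T$, not a collar. That neighbourhood makes $(X,T)$ a good pair, and, with the locally finite chains the paper uses, it makes the inverse system constant so the $\varprojlim^1$ term vanishes. Second, your \emph{main obstacle} is lighter than you suggest. Once the definition forces the gradient to be a matching on $K$, acyclicity is automatic, because along a $V$-path $f(\s_{i+1})<f(\tau_i)\le f(\s_i)$. The unit incidences needed for the cancellations come from regularity of $X$.
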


Although the definition of an open cell complex in \cite{KnudsonScoville2026} is given in the setting of simplicial complexes, the authors note in Theorem 3.1 and Example 3.4 that Theorem \ref{Thm:knudson} also holds for open cell complexes arising from regular CW complexes.

Borel--Moore homology \cite{zbMATH03189968} is a homology theory for locally
compact spaces that extends singular homology by allowing locally finite
chains. These chains are formal sums of singular simplices such that every
compact subset of the space intersects only finitely many simplices appearing
with nonzero coefficient \cite{zbMATH03189968}. For compact spaces, this theory agrees with
singular homology. Unlike singular homology, Borel--Moore homology is a
covariant functor for \emph{proper maps} and is invariant under
\emph{proper homotopies} \cite{zbMATH03189968}.

A continuous map $f:X\to Y$ is \emph{proper} if the preimage of every
compact subset $K\subseteq Y$ is compact:
\[
K\subseteq Y \text{ compact } \implies f^{-1}(K)\subseteq X
\text{ is compact}.
\]
If two proper maps $f_0,f_1:X\to Y$ are connected by a proper homotopy, then they induce the same map on Borel--Moore homology:
\[
(f_0)_*=(f_1)_*:H_k^{\mathrm{BM}}(X)\longrightarrow
H_k^{\mathrm{BM}}(Y).
\]

We recall the definition of a discrete Morse function on an open complex.
Let \(X\) be a regular CW complex, \(T\) a nonempty subcomplex, and
\(K=X-T\). A function \(f:K\rightarrow\mathbb{R}\) is called an open
discrete Morse function(or just a discrete Morse function when from the context it is clear that \(K\) is an open cell complex) if, for every simplex \(\s^{(p)}\in K\),
\[
\#\{\tau^{(p+1)}>\s:\tau\in K,\ f(\tau)\leq f(\s)\}\leq 1,
\]
and
\[
\#\{v^{(p-1)}<\s:v\in K,\ f(v)\geq f(\s)\}\leq 1,
\]
with at least one of the above inequalities being strict
\cite{KnudsonScoville2026}.

\section{A Discrete Morse Function on \texorpdfstring{$C_2(\g)$}{C2(g)}}
We start by considering a connected simple graph \(\g\) and a spanning tree \(T\) of \(\g\). That is, \(T\) is a connected subgraph of \(\g\) such that \(V(\g)=V(T)\) and there is a unique path in
\(T\) between each pair of vertices of \(\g\).
Now we label the graph \(\g\) as follows;
Start with a vertex of valency \(1\), i.e., a vertex that is connected to only one edge in \(\g\); label it \(1\). Now traverse the tree anticlockwise and label the vertices \(i\) in increasing order. An example of such a labelling can be seen in Figure \ref{fig:house_graph}, where the solid lines represent the edges in \(T\), and the dotted lines represent the edges in \(\g - T\).
\begin{figure}[ht]
    \centering
    \begin{tikzpicture}[scale=2]

        \fill (0,0) circle (1pt); 
        \node[below left] at (0,0) {\textbf{7}};
        \fill (1,0) circle (1pt); 
        \node[below right] at (1,0) {\textbf{6}};
        \fill (1,1) circle (1pt); 
        \node[above right] at (1,0.85) {\textbf{3}};
        \fill (0,1) circle (1pt); 
        \node[above left] at (0,1) {\textbf{1}};
        \fill (0.5,1.8) circle (1pt); 
        \node[above] at (0.5,1.8) {\textbf{4}};
        \fill (1.5,1.6) circle (1pt); 
        \node[above] at (1.5,1.6) {\textbf{5}};
        \fill (0.5,0.5) circle (1pt); 
        \node[right] at (0.5,0.5) {\textbf{2}};

        \draw[dotted] (0,0) -- (1,0);
        \draw[dotted] (1,0) -- (1,1);
        \draw[dotted] (1,1) -- (0,1);
        \draw[dotted] (0,1) -- (0,0);

        \draw[midarrow] (0,0) -- (0.5,0.5);
        \draw[midarrow] (1,0) -- (0.5,0.5);
        \draw[midarrow] (1,1) -- (0.5,0.5);
        \draw[midarrow]  (0.5,0.5) -- (0,1);

        \draw[dotted] (0,1) -- (0.5,1.8);
        \draw[midarrow]  (1.5,1.6) -- (1,1);
        \draw[midarrow] (0.5,1.8) -- (1,1);

    \end{tikzpicture}
    \caption{Labeled house graph}
    \label{fig:house_graph}
\end{figure}

Now, we define a discrete Morse function on \(\g\) as follows:
\[
f_1:\g \longrightarrow \mathbb{R},
\]
\[f_1(i):= 2i-2,\]
\[
f_1((i,j)):=
\begin{cases}
\max\{f_1(i),f_1(j)\} & \text{if } (i,j) \in T,\\
 \max\{f_1(i),f_1(j)\}+2& \text{if } (i,j)\in \g - T.
\end{cases}
\]

\begin{remark}
    \(f_1\) is a perfect discrete Morse function on \(\g\) \cite{Sawicki2012}.
\end{remark}
We further define a function \(f_2:X \longrightarrow\mathbb{R}\) as follows:
for cells in \(C_2(\g)\)
\[
\begin{aligned}
0\text{-cells}: \quad & {f}_{2}(i \x j) = f_{1}(i) + f_{1}(j), \\
1\text{-cells}: \quad & {f}_{2}(i \x (j,k)) = f_{1}(i) + f_{1}((j,k)), \\
2\text{-cells}: \quad & {f}_{2}((i,j) \x (k,l)) = f_{1}((i,j)) + f_{1}((k,l)).
\end{aligned}
\]
for cells in \(\Delta\)
\[
\begin{aligned}
0\text{-cells}: \quad & {f}_{2}(i \x i) = -4, \\
1\text{-cells}: \quad & {f}_{2}(D(i,j)) = -2.
\end{aligned}
\]

This function, in general, is not a discrete Morse function. We check the definition of a discrete Morse function for each cell and modify the values of \(f_2\) on the cells that do not satisfy the definition. We denote the resulting function by \(\tilde{f}_2:X\rightarrow\mathbb{R}\), which is a discrete Morse function. For the cells \(\s\) which already satisfy the conditions for a discrete Morse function, we define \(\tilde{f}_2(\s)=f_2(\s)\).

A discrete Morse function on \(\operatorname{Sym}^2(\g)\) with all cells in \(\Delta\) critical, when restricted to \(C_2(\g) = X-\Delta\), satisfies the definition of a discrete Morse function on the open cell complex \(C_2(\g)\)
\cite{KnudsonScoville2026}.

We will use this fact to define a discrete Morse function on \(C_2(\g)\).
We further define the Morse complex corresponding to such open discrete Morse functions in the standard way, considering only cells on \(K\).

If each \(2\)-cell in \(C_2(\g)\) has a non-negative value, then each \(1\)-cell \(D(i,j)\) in \(\Delta\) satisfies the definition of a discrete Morse function and is critical. Similarly, if each \(1\)-cell in \(C_2(\g)\) has a non-negative value, then each
\(0\)-cell \((i,i)\in\Delta\) satisfies the definition and is critical.

Now, we only need to check the cells in \( C_2(\g) \), because cells in \( \Delta \) automatically satisfy the definition, given that the cells in \( C_2(\g) \) have non-negative values.

Now, we check the definition for cells in \(C_2(\g)\).
We note that for each edge \(e \in T\) there exists a unique \(0\)-cell \(v\) of \(e\) with \(f_1(e)=f_1(v)\), we represent such edges by \(e(v)\), we further note that there is no such edge for the vertex \(1\), therefore there is a one to one correspondence 
\[
\begin{aligned}
\operatorname{V}(\g)-\{1\}
&\longleftrightarrow
E(T),\\
v &\longleftrightarrow e(v).
\end{aligned}
\]
 Also, each vertex \(v\) can have edges in \(\g - T\), connected with \(v\), we represent set of such edges by \(D_v\) and the edges in \(T\) different from \(e(v)\) connected to \(v\) by \(T_v\).
 \begin{figure}[ht]
\centering
\begin{tikzpicture}[>=stealth,scale=1.2]

\node[circle,fill,inner sep=1.8pt,label=below:\(v\)] (v) at (0,0) {};

\draw[midarrow] (-1.7,1.4) -- (v);
\draw[midarrow] (-1.85,1) -- (v);
\draw[midarrow] (-1.45,1.65) -- (v);

\draw[midarrow] (v) -- (1.6,0.6);

\draw (-1.7,-1.4) -- (v);
\draw (-1.85,-1) -- (v);
\draw (-1.45,-1.65) -- (v);

\node at (-2.0,1.4) {\(T_v\)};
\node at (-2.0,-1.4) {\(D_v\)};
\node at (1.6,0.8) {\(e(v)\)};

\end{tikzpicture}

\caption{Edges around vertex \(v\)}
\label{fig:vertex-v}

\end{figure}
 Note that \(e(v)\) is the unique edge with an arrow directed away from \(v\) in the discrete gradient vector field induced by \(f_1\), while \(T_v\) consists of the edges with arrows directed towards \(v\), and \(D_v\) consists of the edges incident to \(v\) that are not paired in the discrete gradient vector field, as shown in Figure~\ref{fig:vertex-v}.
 
\textbf{Step 1.}
We partition the set of \(2\)-cells in \(C_2(\g)\) as follows:
\begin{enumerate}
    \item[1.] \(\al = e_1\x e_2 \) with \(e_1, e_2 \in \g - T\) and \(e_1 \neq e_2\).
    \item[2.] \(\al = C_2(e) \) with \(e \in \g - T\).
    \item[3.] \(\al = e\x e(u) \) with \(e\in \g - T, e(u) \in  T\).
    \item[4.] \(\al = e(u)\x e(v) \) with \(e(u), e(v) \in  T\) and \(e(u) \neq e(v)\).
    \item[5.] \(\al = C_2(e(v)) \) with \( e(v) \in  T\).
\end{enumerate}

For each cell in Cases \(1-5\), we check the following condition:
\begin{equation}
\label{cell_2_condition}
\#\{ \be^{(1)}\in C_2(\g)\,|\, \be<\al \text{ and }f_2(\be)\geq f_2(\al) \}\leq 1.
\end{equation}

\textbf{Step 2.}
We partition the set of \(1\)-cells in \(C_2(\g)\) as follows:
\begin{enumerate}
    \item[6.] \(\be = v\x e(u)\) with \(e(u)\in T\) and \(e(u)\cap e(v) \neq \emptyset \).
    \item[7.] \(\be = v\x e\) with \(e \in \g - T\) and \(e(v)\cap e \neq \emptyset \).
    \item[8.] \(\be = v\x e(u)\) with \(e(u)\in T\) and \(e(u)\cap e(v) = \emptyset \).
    \item[9.] \(\be = v\x e\) with \(e \in \g - T\) and \(e(v)\cap e = \emptyset \).
\end{enumerate}

For each cell in Cases \(6-9\), we check the following conditions:
\begin{equation}
\label{cell_1_condition_1}
\#\{\al^{(2)}\in C_2(\g)\,|\,\be<\al \text{ and }f_2(\be)\geq f_2(\al) \}\leq 1,
\end{equation}
and 
\begin{equation}
\label{cell_1_condition_2}
\#\{\ka^{(0)}\in C_2(\g)\,|\, \ka<\be \text{ and }f_2(\ka)\geq f_2(\be) \}\leq 1.
\end{equation}

\textbf{Step 3.}
We partition the set of \(0\)-cells in \(C_2(\g)\) as follows:
\begin{enumerate}
    \item[10.] \(\ka = u\x v \) with \(u\neq v\) and \(u\neq 1, v \neq 1\).
    \item[11.] \(\ka = 1\x v \) with \(v\neq 1\).
\end{enumerate}
For each cell in Cases \(10-11\), we check the following condition:
\begin{equation}
\label{cell_0_condition}
\#\{\be^{(1)} \in C_2(\g)\,|\,\ka<\be \text{ and }f_2(\ka)\geq f_2(\be) \}\leq 1.
\end{equation}

With a modification of \(f_2\) on  \(2\)-cells in Case 4 in \textbf{Step 1}, i.e. \(\al = e(u)\x e(v)\) with \(e(u)\neq e(v)\), and tracking these changes in \(\tilde{f}_2\), we get a discrete Morse function.

We also set the convention that whenever \(e\) is a \(1\)-cell in \(T\), we write \(e=(v,\tau{(v)})\), where \(v\) is the unique face of \(e\) with \(e=e(v)\) and \(\tau{(v)}\) is the terminal vertex of \(e\), i.e. \(\tau(v)\) is the unique face of \(e\) with \(f_1(\tau(v))<f_1(e)\). Similarly, whenever \(e\) is a \(1\)-cell in \(\g - T\), we write \(e = (i,j)\), where \(i, j\) are faces of \(e\) and we always assume \(i<j\).

\begin{lemma}
    The \(2\)-cells of the form \(\al = e_1\x e_2 \), where \(e_1, e_2 \in \g - T\) and \(e_1 \neq e_2\) satisfy condition \eqref{cell_2_condition} and are critical.
\end{lemma}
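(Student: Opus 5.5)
We will verify directly that for \(\al=e_1\x e_2\) with \(e_1=(i,j)\), \(e_2=(k,l)\) both in \(\g-T\) and \(e_1\neq e_2\), every \(1\)-face \(\be<\al\) has \(f_2(\be)<f_2(\al)\). This shows that \eqref{cell_2_condition} holds with count \(0\), and that \(\al\) is not the tail side of any pair from below. Since \(\al\) is a top-dimensional cell (\(\operatorname{Sym}^2(\g)\) has dimension \(2\)), there are no cofaces, so the upper inequality is trivially strict. Hence \(\al\) is critical. We also need to know that the cell \(e_1\x e_2\) actually lies in \(C_2(\g)\), i.e.\ meets no diagonal cell. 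Since \(e_1\neq e_2\), the product is a genuine square, possibly sharing a vertex, and its faces are of the form \(v\x e\), all lying in \(C_2(\g)\).

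\textbf{Key computation.} The \(1\)-faces of \(\al\) are \(i\x e_2\), \(j\x e_2\), \(e_1\x k\), and \(e_1\x l\). For a non-tree edge we have \(f_1(e_1)=\max\{f_1(i),f_1(j)\}+2\ge f_1(i)+2>f_1(i)\), and likewise for \(j\). Therefore
\[
f_2(i\x e_2)=f_1(i)+f_1(e_2)<f_1(e_1)+f_1(e_2)=f_2(\al),
\]
and the same strict inequality holds for \(j\x e_2\). Symmetrically, \(f_2(e_1\x k)=f_1(e_1)+f_1(k)<f_2(\al)\) because \(f_1(k)<f_1(e_2)\), and the same holds for \(e_1\x l\). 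So no face has value \(\ge f_2(\al)\).

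\textbf{Consistency with \(\tilde f_2\).} Finally, we remark that the paper only modifies values on Case 4 cells \(e(u)\x e(v)\), so \(\tilde f_2(\al)=f_2(\al)\). The \(1\)-cells adjacent to \(\al\) also keep their \(f_2\) values, since only \(2\)-cells in Case 4 are altered. Thus the strict inequalities persist for \(\tilde f_2\).

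\textbf{Main obstacle.} The only delicate point is bookkeeping: whether the strictness claimed for a \(1\)-face viewed from \(\al\) is compatible with how that \(1\)-face is later paired. That is a concern for the other lemmas, not for criticality of \(\al\), which depends only on the faces of \(\al\).
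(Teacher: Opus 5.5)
Your proof is correct and takes the same approach as the paper: a direct check that each of the four \(1\)-faces \(i\x e_2\), \(j\x e_2\), \(e_1\x k\), \(e_1\x l\) has \(f_2\)-value at least \(2\) below \(f_2(\al)=f_1(e_1)+f_1(e_2)\), with criticality following because \(\al\) has no cofaces. One correction: it is not true that only \(2\)-cells are altered, since Lemma~\ref{lemma5} also raises \(\tilde f_2(u\x e(v))\) on certain \(1\)-cells; your conclusion still holds because those \(1\)-cells have a tree-edge factor, whereas every face of \(\al\) is a vertex times \(e_1\) or \(e_2\in\g-T\) (and a \(+1\) shift could not close a gap of \(2\) anyway).
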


\begin{proof}
Let \(e_1 = (i,j)\) and \(e_2 = (k,l)\). The \(2\)-cell \(\al\) and the values of \(f_2\) on its faces are given in the following diagram:
\par\noindent
\centering
    \begin{tikzpicture}
    
        \fill[gray!50] (0,0) rectangle (4,4);
        
        \draw[thick] (0,0) -- (4,0) node[midway, below] {\small \(f_1(k) + f_1(j) + 2\)};
        \draw[thick] (4,0) -- (4,4) node[midway, right] {\small \(f_1(j) + f_1(l)+2\)};
        \draw[thick] (4,4) -- (0,4) node[midway, above] {\small \(f_1(l)+f_1(j)+2\)};
        \draw[thick] (0,4) -- (0,0) node[midway, left] {\small \(f_1(i)+f_1(l)+2\)};
    
        \fill[black] (0,0) circle (3pt);
        \fill[black] (4,0) circle (3pt);
        \fill[black] (4,4) circle (3pt);
        \fill[black] (0,4) circle (3pt);
    
        \node[below left] at (0,0) {\small \(i\x k\)};
        \node[below right] at (4,0) {\small \(j\x k\)};
        \node[above right] at (4,4) {\small \(j \x l\)};
        \node[above left] at (0,4) {\small \(i \x l\)};
    
        \node at (2,2) {\small \(f_1(j)+f_1(l)+4\)};
    \end{tikzpicture}
\par

Note that \(f_1(i)<f_1(j)\) and \(f_1(k)<f_1(l)\). Therefore, for each face \(\be^{(1)}<\al\), we have \(f_2(\be)<f_2(\al)\). Hence, the condition \eqref{cell_2_condition} is satisfied and \(\al\) is a critical cell.
\end{proof}

\begin{lemma}
    The \(2\)-cells of the form \(\al = C_2(e)\), where \(e\in \g - T\), satisfy condition \eqref{cell_2_condition} and are critical.
\end{lemma}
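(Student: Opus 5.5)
Let \(e=(i,j)\in\g-T\) with \(i<j\), so \(f_1(i)<f_1(j)\), and \(f_1(e)=f_1(j)+2\). As in the previous lemma, I will compute \(f_2\) on \(\al=C_2(e)\) and on its faces, and compare. The only faces of \(C_2(e)\) that lie in \(C_2(\g)\) are the \(1\)-cells \(i\x e\) and \(j\x e\). The third edge \(D(e)\) lies in \(\Delta\), so it does not enter condition \eqref{cell_2_condition}, which only counts faces \(\be\in C_2(\g)\). Since \(C_2(e)\) is half of the square \(e\x e\), it is natural to set \(f_2(C_2(e))=f_1(e)+f_1(e)=2f_1(j)+4\). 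This is consistent with the defining formula for \(2\)-cells.

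The face values are \(f_2(i\x e)=f_1(i)+f_1(j)+2\) and \(f_2(j\x e)=2f_1(j)+2\). Both are strictly smaller than \(2f_1(j)+4\): the first because \(f_1(i)<f_1(j)\), the second because \(2<4\). So no face \(\be<\al\) in \(C_2(\g)\) has \(f_2(\be)\ge f_2(\al)\), and condition \eqref{cell_2_condition} holds with count \(0\). The face \(D(e)\in\Delta\) has value \(-2<f_2(\al)\) in any case.

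For criticality, \(\al\) is top-dimensional because the complex is \(2\)-dimensional, so it has no cofaces. The coface inequality is therefore vacuously strict, and we have just shown the face inequality is strict. Hence \(\al\) is critical.

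The only point needing care is the bookkeeping for \(C_2(e)\). One must confirm which boundary cells belong to \(C_2(\g)\) rather than \(\Delta\), and fix the value of \(f_2\) on the half-square, using the lower-half convention of Figure~\ref{fig:C2e}. This step is routine, and I do not expect a real obstacle.
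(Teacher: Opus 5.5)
Your proof is correct and is essentially the paper's argument. You compute \(f_2(C_2(e))=2f_1(j)+4\) and compare it with the face values \(f_1(i)+f_1(j)+2\) on \(i\x e\), \(2f_1(j)+2\) on \(j\x e\), and \(-2\) on \(D(e)\). From \(f_1(i)<f_1(j)\) you conclude that every face has strictly smaller value. Your explicit remark that \(\al\) has no cofaces, so the coface inequality is vacuously strict, is a point the paper leaves implicit. You also correctly write \(f_1(j)\) where the paper's diagram has the typo \(f_2(j)\).
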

\begin{proof}
This proof is similar to the previous proof.
Let \(e=(i,j)\), the \(2\)-cell \(\al\) and the values on its faces are given in the following diagram:
\par\noindent
\centering
    \begin{tikzpicture}
    
        \fill[gray!50] (0,0) -- (4,0) -- (4,4) -- cycle;
        
        \draw[thick] (0,0) -- (4,0) node[midway, below] {\small \(f_1(i) + f_2(j) + 2\)};
        \draw[thick] (4,0) -- (4,4) node[midway, right] {\small \(2f_1(j)+2\)};
        \draw[thick] (4,4) -- (0,0) node[midway, sloped, above] {\small \(f_2(D(i,j))=-2\)};
       
        \fill[black] (0,0) circle (3pt);
        \fill[black] (4,0) circle (3pt);
        \fill[black] (4,4) circle (3pt);
       
        \node[below left] at (0,0) {\small \(i\x i\)};
        \node[below right] at (4,0) {\small \(j\x i\)};
        \node[above right] at (4,4) {\small \(j \x j\)};
       
        \node at (2.5,1.25) {\small \(2f_1(j)+4\)};
    \end{tikzpicture}
\par

Note that \(f_1(i)<f_1(j)\). Therefore, for each face \(\be^{(1)}<\al\), we have \(f_2(\be)<f_2(\al)\). Hence, the condition \eqref{cell_2_condition} is satisfied and \(\al\) is a critical cell.
\end{proof}

\begin{lemma}
\label{lemma4}
    The \(2\)-cells of the form \(\al = e\x e(v) \), where \(e\in \g - T,\, e(v) \in  T\) satisfy condition \eqref{cell_2_condition} and are non-critical.
\end{lemma}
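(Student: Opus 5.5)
Write \(e=(i,j)\) with \(i<j\), so that \(f_1(e)=f_1(j)+2\). By the convention above, write \(e(v)=(v,\tau(v))\), so that \(f_1(e(v))=f_1(v)\) and \(f_1(\tau(v))<f_1(v)\). The cell \(\al=e\x e(v)\) lies in \(C_2(\g)\) only if \(e\) and \(e(v)\) are distinct edges, since \(e\) is not in \(T\). As in the previous two lemmas, I would draw the square \(\al\) together with its four boundary \(1\)-cells and record the values of \(f_2\) on them:
\[
f_2(\al)=f_1(j)+2+f_1(v),\qquad f_2(i\x e(v))=f_1(i)+f_1(v),\qquad f_2(j\x e(v))=f_1(j)+f_1(v),
\]
\[
f_2(e\x v)=f_1(j)+2+f_1(v),\qquad f_2(e\x \tau(v))=f_1(j)+2+f_1(\tau(v)).
\]
If \(e\) and \(e(v)\) share a vertex, one of the four edges \(i\x e(v)\), \(j\x e(v)\), \(e\x v\), \(e\x\tau(v)\) would be of the form \(w\x(w,\cdot)\). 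I would handle this degenerate case separately, or argue that the corresponding boundary edge still carries the same \(f_2\)-value under the identification of the open complex, so that the count below is unaffected.

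Next I compare each face value with \(f_2(\al)\). Since \(f_1(i)<f_1(j)\), the face \(i\x e(v)\) is strictly below \(f_2(\al)\). The face \(j\x e(v)\) is below by exactly \(2\). Because \(f_1(\tau(v))<f_1(v)\), the face \(e\x\tau(v)\) is also strictly below. The only face with \(f_2(\be)\geq f_2(\al)\) is therefore \(\be=e\x v\), where equality holds. So the set in \eqref{cell_2_condition} has exactly one element, and the condition is satisfied.

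Finally, because this count is exactly one and not zero, the second defining inequality is not strict for \(\al\). Since \(\al\) is top-dimensional, it has no cofaces, so the first inequality is trivially strict. Hence \(\al\) is non-critical, and in the induced gradient field it is paired with its face \(e\x v\). I expect the main obstacle to be the bookkeeping: keeping the unordered product conventions straight and checking that the equality \(f_2(e\x v)=f_2(\al)\) is not also matched by some other face when \(e\) and \(e(v)\) are adjacent. This pairing should then be recorded for the later verification of condition \eqref{cell_1_condition_1} on the \(1\)-cells \(v\x e\) in Cases 7 and 9.
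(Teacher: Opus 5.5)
Your proposal is correct and follows the paper's proof: it uses the same square with faces \(i\x e(v)\), \(j\x e(v)\), \(v\x e\), \(\tau(v)\x e\) and the same \(f_2\)-values, and it concludes that \(v\x e\) is the unique face with \(f_2(\be)\geq f_2(\al)\) (with equality), so \(\al\) is non-critical. The adjacent case you flag needs no separate treatment. Since \(\g\) is simple, at most one corner of the square lies in \(\Delta\), and all four faces remain distinct \(1\)-cells of \(C_2(\g)\) whose values are given by the same formulas.
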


\begin{proof}
Let \(e = (i,j)\) and \(e(v) = (v,\tau(v))\). The \(2\)-cell \(\al\) and values on its faces are given in the following diagram:
\par\noindent
\centering
    \begin{tikzpicture}
    
        \fill[gray!50] (0,0) rectangle (4,4);
        
        \draw[thick] (0,0) -- (4,0) node[midway, below] {\small \(f_1(\tau(v)) + f_1(j) + 2\)};
        \draw[thick] (4,0) -- (4,4) node[midway, right] {\small \(f_1(v) + f_1(j)\)};
        \draw[thick] (4,4) -- (0,4) node[midway, above] {\small \(f_1(v)+f_1(j)+2\)};
        \draw[thick] (0,4) -- (0,0) node[midway, left] {\small \(f_1(v)+f_1(i)\)};
        \draw[thick,->] (2,4) -- (2,3);
    
        \fill[black] (0,0) circle (3pt);
        \fill[black] (4,0) circle (3pt);
        \fill[black] (4,4) circle (3pt);
        \fill[black] (0,4) circle (3pt);
    
        \node[below left] at (0,0) {\small \(i\x \tau(v)\)};
        \node[below right] at (4,0) {\small \(j\x \tau(v)\)};
        \node[above right] at (4,4) {\small \(j \x v\)};
        \node[above left] at (0,4) {\small \(i \x v\)};
    
        \node at (2,2) {\small \(f_1(v)+f_1(j)+2\)};  
    \end{tikzpicture}
\par
Note that \(f_2(v\x e) = f_2(\al)\), and for the other faces \(\be^{(1)}<\al\), we get \(f_2(\be)< f_2(\al)\), because \(f_1(i)<f_1(j)\) and \(f_1(\tau(v))<f_1(v)\). Therefore, the condition \eqref{cell_2_condition} is satisfied and \(\al\) is a non-critical cell.
\end{proof}

\begin{lemma}
\label{lemma5}
    For the \(2\)-cells of the form \(\al = e(u)\x e(v)\), where
    \(e(u), e(v) \in T\) and \(e(u) \neq e(v)\), there exist exactly two
    \(1\)-faces, namely \(u\x e(v)\) and
    \(v\x e(u)\), such that
    \(f_2(\al)=f_2(u\x e(v))=f_2(v\x e(u))\). Without loss of generality assume \(u< v\), we define
    \(\tilde{f}_2(\al)=f_2(\al)+1\) and
    \(\tilde{f}_2(u\x e(v))=f_2(u\x e(v))+1\). After this modification,
    \(\al\) satisfies the condition \eqref{cell_2_condition} and is
    non-critical.
\end{lemma}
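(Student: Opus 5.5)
Write \(e(u)=(u,\tau(u))\) and \(e(v)=(v,\tau(v))\), using the convention fixed above. Then \(f_1(e(u))=f_1(u)\) and \(f_1(e(v))=f_1(v)\), so
\[
f_2(\al)=f_1(u)+f_1(v).
\]
Since \(\al\) lies in \(C_2(\g)\), the closures of \(e(u)\) and \(e(v)\) are disjoint. Hence the four \(1\)-faces of \(\al\) are \(u\x e(v)\), \(\tau(u)\x e(v)\), \(e(u)\x v\) and \(e(u)\x\tau(v)\), all lying in \(C_2(\g)\). By the definition of \(f_2\):
\[
f_2(u\x e(v))=f_1(u)+f_1(v),\qquad f_2(\tau(u)\x e(v))=f_1(\tau(u))+f_1(v),
\]
\[
f_2(v\x e(u))=f_1(v)+f_1(u),\qquad f_2(\tau(v)\x e(u))=f_1(\tau(v))+f_1(u).
\]
By the convention, \(f_1(\tau(u))<f_1(u)\) and \(f_1(\tau(v))<f_1(v)\). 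So the two faces involving \(\tau(u)\) or \(\tau(v)\) have \(f_2\)-value strictly below \(f_2(\al)\). The remaining two faces, \(u\x e(v)\) and \(v\x e(u)\), have value equal to \(f_2(\al)\), and they are distinct cells. This proves the first claim: exactly two faces attain equality, so condition \eqref{cell_2_condition} fails for the unmodified \(f_2\).

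Next apply the modification with \(u<v\). We get
\[
\tilde f_2(\al)=f_1(u)+f_1(v)+1,\qquad \tilde f_2(u\x e(v))=f_1(u)+f_1(v)+1,
\]
while \(v\x e(u)\) keeps the value \(f_1(u)+f_1(v)\). The two lower faces keep values strictly less than \(f_2(\al)\). The other modifications made at this stage only raise values of \(2\)-cells of the same type, or of \(1\)-faces of the form \(u'\x e(v')\). So we must confirm that neither \(v\x e(u)\) nor the lower faces of \(\al\) are raised through some other \(2\)-cell.

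This bookkeeping is the only real obstacle. A \(1\)-cell \(x\x e(w)\) is raised only when it arises as the designated face of some \(e(x)\x e(w)\) with \(x<w\). The cell \(e(u)\x e(v)\) is determined by the pair \(\{e(u),e(v)\}\), and the \(1\)-face of the form \(x\x e(w)\) with \(x\) the defining vertex of the other edge is unique. Hence each face \(x\x e(w)\) is raised at most once, and only by the single \(2\)-cell \(e(x)\x e(w)\). Even if a lower face like \(\tau(u)\x e(v)\) is raised by its own cell \(e(\tau(u))\x e(v)\), its new value is \(f_1(\tau(u))+f_1(v)+1\le f_1(u)+f_1(v)-1\). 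This holds because \(f_1\) takes even values, so \(f_1(\tau(u))\le f_1(u)-2\). Thus it stays below \(\tilde f_2(\al)\).

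Therefore, under \(\tilde f_2\), exactly one face of \(\al\) has value \(\ge \tilde f_2(\al)\), namely \(u\x e(v)\), with equality. All other faces are strictly smaller. Condition \eqref{cell_2_condition} holds, and by \eqref{exclusive condition 2} the cell \(\al\) is non-critical, paired with \(u\x e(v)\).
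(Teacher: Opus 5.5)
Your proof follows the paper's. You compute the four face values, observe the two ties at \(u\x e(v)\) and \(v\x e(u)\), and raise \(\al\) together with \(u\x e(v)\) by \(1\). You also add a check the paper leaves implicit: no other modification from Lemma~\ref{lemma5} lifts a face of \(\al\) up to \(\tilde f_2(\al)\). This holds because \(v\x e(u)\) could only be raised by \(\al\) itself. A raised lower face is at most \(f_1(u)+f_1(v)-1\), since \(f_1\) takes even values on vertices.

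One justification is false, though the argument survives it. You claim that because \(\al\) lies in \(C_2(\g)\), the closures of \(e(u)\) and \(e(v)\) are disjoint. That conflates the open complex \(C_2(\g)=\operatorname{Sym}^2(\g)-\Delta\) with the discretized configuration space. The open complex contains \(e(u)\x e(v)\) for adjacent edges too, and the paper relies on these: the next lemma treats case (i) with \(u\neq v\) and \(e(u)\cap e(v)\neq\emptyset\) as modified in Lemma~\ref{lemma5}.

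Nothing in your argument actually needs disjointness:
\begin{enumerate}
\item The four \(1\)-faces are still distinct cells of \(C_2(\g)\), because the only \(1\)-cells of \(\Delta\) are the diagonal cells \(D(e)\).
\item In an adjacent case such as \(\tau(v)=u\), the lower face \(\tau(v)\x e(u)=u\x e(u)\) is never raised, since \(e(u)\x e(u)\) is not in the family. Its value \(2f_1(u)\) lies below \(\tilde f_2(\al)\).
\end{enumerate}
So you should drop the disjointness claim and note that your computation covers adjacent pairs as well.
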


\begin{proof}
Let \(e(u) = (u,\tau(u))\) and \(e(v) = (v,\tau(v))\). The \(2\)-cell \(\al\) and values on its faces is given in the following diagram:
\par\noindent
\centering
    \begin{tikzpicture}
        \fill[gray!50] (0,0) rectangle (4,4);
        
        \draw[thick] (0,0) -- (4,0) node[midway, below] {\small \(f_1(u) + f_1(\tau(v)) \)};
        \draw[thick] (4,0) -- (4,4) node[midway, right] {\small \(f_1(u) + f_1(v)\)};
        \draw[thick] (4,4) -- (0,4) node[midway, above] {\small \(f_1(u)+f_1(v)\)};
        \draw[thick] (0,4) -- (0,0) node[midway, left] {\small \(f_1(\tau(u))+f_1(v)\)};
        \draw[thick,->] (2,4) -- (2,3);
        \draw[thick,->] (4,2) -- (3.25,2);
    
        \fill[black] (0,0) circle (3pt);
        \fill[black] (4,0) circle (3pt);
        \fill[black] (4,4) circle (3pt);
        \fill[black] (0,4) circle (3pt);
    
        \node[below left] at (0,0) {\small \(\tau(u)\x \tau(v)\)};
        \node[below right] at (4,0) {\small \(u\x \tau(v)\)};
        \node[above right] at (4,4) {\small \(u \x v\)};
        \node[above left] at (0,4) {\small \(\tau(u) \x v\)};
    
        \node at (2,2) {\small \(f_1(u)+f_1(v)\)};
    \end{tikzpicture}
\par

Considering that \(f_1(\tau(u))<f_1(u)\) and \(f_1(\tau(v))<f_1(v)\), we note that there are two \(1\)-cells \(\be_1 = u\x e(v)\) and \(\be_2 = v \x e(u)\) with \(f_2(\be_i)\geq f_2(\al)\). In fact \(f_2(\al) = f_2(\be_1) = f_2(\be_2)\). We modify the value of \(\al\) along with \(\be_1\) as follows:
\(\tilde{f}_2(\be_1)=\tilde{f}_2(\al) = f_2(\al)+1\). Therefore, after this modification, the condition \eqref{cell_2_condition} is satisfied and \(\al\) is non-critical.
\end{proof}
Note that we could equivalently modify the value of \(v\x e(u)\). However, modifying the value of \(u\x e(v)\) as above is sufficient to obtain a discrete Morse function.

\begin{lemma}
\label{lemma6}
    The \(2\)-cells of the form \(\al = C_2(e(v))\), where \(e(v) \in T\) satisfy condition \eqref{cell_2_condition} and are non-critical.
\end{lemma}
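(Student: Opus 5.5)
Following the pattern of the previous lemmas, I would write \(e(v)=(v,\tau(v))\) and list the three \(1\)-faces of the triangle \(\al=C_2(e(v))\) (the lower half of \(e(v)\x e(v)\) in Figure~\ref{fig:C2e}), together with the values of \(f_2\) on them and on \(\al\). The faces are the two cells \(\tau(v)\x e(v)\) and \(v\x e(v)\) lying in \(C_2(\g)\), plus the diagonal cell \(D(e(v))\subseteq\Delta\). By definition \(f_2(\al)=2f_1(e(v))=2f_1(v)\), since \(f_1(e(v))=f_1(v)\) for tree edges.

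The face values are as follows:
\[
f_2(v\x e(v))=f_1(v)+f_1(e(v))=2f_1(v),\qquad f_2(\tau(v)\x e(v))=f_1(\tau(v))+f_1(v),\qquad f_2(D(e(v)))=-2.
\]
Because \(f_1(\tau(v))<f_1(v)\), the face \(\tau(v)\x e(v)\) has strictly smaller value than \(\al\). The diagonal face has value \(-2<0\le f_2(\al)\). So the only face with \(f_2(\be)\ge f_2(\al)\) is \(v\x e(v)\), where equality holds. This shows that condition \eqref{cell_2_condition} holds, with count exactly \(1\). The cell \(\al\) is therefore non-critical, paired with \(v\x e(v)\) in the gradient, and I would draw the arrow from \(v\x e(v)\) into \(\al\) as in the earlier diagrams.

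The only point needing care is that the modification in Lemma~\ref{lemma5} does not touch these values. That lemma changes \(f_2\) only on cells \(e(u)\x e(w)\) with \(e(u)\neq e(w)\), and on their faces \(u\x e(w)\) with \(u<w\). Such a face is never \(v\x e(v)\): it would need \(u=v\) and \(w=v\), contradicting \(u<w\). A face \(u\x e(w)\) could coincide with \(\tau(v)\x e(v)\) when \(u=\tau(v)\) and \(w=v\), and this does occur when \(\tau(v)<v\) and \(\tau(v)\neq 1\). In that case the modified value is \(f_1(\tau(v))+f_1(v)+1\). Since \(f_1\) takes even values, \(f_1(\tau(v))\le f_1(v)-2\), so this value is still at most \(2f_1(v)-1<f_2(\al)\). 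Hence the conclusion survives with \(\tilde f_2\) in place of \(f_2\).

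The main obstacle is exactly this interaction with Lemma~\ref{lemma5}: the parity gap of \(2\) in \(f_1\) is what absorbs the \(+1\) shift. Checking the lower half versus the upper half is just bookkeeping.
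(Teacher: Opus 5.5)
Your proposal is correct and follows the paper's proof. Like the paper, you list the faces of $C_2(e(v))$, get equality $f_2(v\x e(v))=f_2(\al)=2f_1(v)$ and strict inequality on $\tau(v)\x e(v)$, so the count in \eqref{cell_2_condition} is exactly one. You also check that the $+1$ shift from Lemma~\ref{lemma5} on $\tau(v)\x e(v)$ is absorbed by the parity gap $f_1(\tau(v))\le f_1(v)-2$; this shift occurs whenever $\tau(v)\neq 1$, since $\tau(v)<v$ always holds for this labelling, and the paper's proof leaves this check implicit, so it is a worthwhile addition.
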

\begin{proof}

This proof is similar to the previous proof. Let \(e(v) = (v,\tau(v))\), the \(2\)-cell \(\al\) and values of \(f_2\) on its faces is given in the following diagram:
\par\noindent
\centering
    \begin{tikzpicture}
    
        \fill[gray!50] (0,0) -- (4,0) -- (4,4) -- cycle;
        
        \draw[thick] (0,0) -- (4,0) node[midway, below] {\small \(f_1(v) + f_1(\tau(v))\)};
        \draw[thick] (4,0) -- (4,4) node[midway, right] {\small \( 2f_1(v)\)};
        \draw[thick] (4,4) -- (0,0) node[midway, sloped, above] {\small \(f_1(D(v,\tau(v)))=-2\)};
        \draw[thick,->] (4,2) -- (3,2);
       
        \fill[black] (0,0) circle (3pt);
        \fill[black] (4,0) circle (3pt);
        \fill[black] (4,4) circle (3pt);
       
        \node[below left] at (0,0) {\small \(\tau(v)\x \tau(v)\)};
        \node[below right] at (4,0) {\small \(v\x \tau(v)\)};
        \node[above right] at (4,4) {\small \(v \x v\)};
       
        \node at (2.5,1.25) {\small \(2f_1(v)\)};
    \end{tikzpicture}
\par
Note that \(f_1(\tau(v))<f_1(v)\), therefore \(f_2(\tau(v)\x e(v)) < f_2(\al)\) and \(f_2(v\x e(v)) = f_2(\al)\), therefore the condition \eqref{cell_2_condition} is satisfied and \(\al\) is a non-critical cell.
\end{proof}

For each \(1\)-cell, say \(\be\), in Cases 6-9, we will consider the \(2\)-cells, say \(\al\), such that \(\be\) could occur as a face of \(\al\). We will first check whether the values of \(f_2\) on \(\al\) or \(\be\) were modified in Lemma \ref{lemma5}, and then check the conditions \eqref{cell_1_condition_1} and \eqref{cell_1_condition_2} accordingly.
 
\begin{lemma}
    The \(1\)-cells of the form \(\be = v\x e(u)\), where \(e(u)\in T\) and \(e(v)\cap e(u) \neq \emptyset\), satisfy conditions \eqref{cell_1_condition_1} and \eqref{cell_1_condition_2} and are non-critical.
\end{lemma}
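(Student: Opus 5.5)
We verify the two conditions directly, as in Lemmas \ref{lemma4}--\ref{lemma6}. Write \(e(u)=(u,\tau(u))\). In every case \(f_2(\be)=f_1(v)+f_1(u)\), possibly raised by \(1\) by Lemma \ref{lemma5}. The \(0\)-faces of \(\be\) are \(v\x u\) and \(v\x\tau(u)\), discarding any that lies in \(\Delta\). The \(2\)-cofaces are \(e'\x e(u)\) for edges \(e'\) incident to \(v\) with \(e'\neq e(u)\), together with \(C_2(e(u))\) when \(v\) is an endpoint of \(e(u)\). The edges \(e'\) split into \(e(v)\), the edges of \(T_v\), and the edges of \(D_v\).

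First we dispose of the generic cofaces. For \(e'\in D_v\) we have \(f_1(e')\geq f_1(v)+2\). For \(e'=e(w)\in T_v\) we have \(\tau(w)=v\), so \(f_1(e')=f_1(w)>f_1(v)\). Hence these cofaces have strictly larger \(f_2\)-value than \(\be\), even after \(\be\) is raised by \(1\), since all values are even before modification. Lemma \ref{lemma5} only increases values of \(2\)-cells, so it cannot spoil these strict inequalities. Only two cofaces can tie with \(\be\): \(e(v)\x e(u)\) and \(C_2(e(u))\).

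The condition \(e(v)\cap e(u)\neq\emptyset\) then splits into three subcases.

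\textbf{Case (a): \(v=u\).} Then \(\be=v\x e(v)\) and \(v\x v\in\Delta\) is excluded. The only \(0\)-face is \(v\x\tau(v)\), whose value is strictly smaller. The coface \(C_2(e(v))\) has equal value by Lemma \ref{lemma6}, and there is no other tie. Hence \(\be\) is paired upward with \(C_2(e(v))\).

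\textbf{Case (b): \(v=\tau(u)\).} The face \(\tau(u)\x\tau(u)\) is diagonal, so the only \(0\)-face is \(\tau(u)\x u\), whose \(f_2\)-value equals \(f_2(\be)\). The coface \(C_2(e(u))\) has value \(2f_1(u)>f_2(\be)\).
\begin{itemize}
\item If \(v\neq 1\), the coface \(e(v)\x e(u)\) ties with \(\be\). Since \(f_1(\tau(u))<f_1(u)\) gives \(v<u\), Lemma \ref{lemma5} raises exactly \(v\x e(u)=\be\) together with this \(2\)-cell. Now the \(0\)-face is strictly below \(\be\), and the pair \(\be<e(v)\x e(u)\) is the unique tie.
\item If \(v=1\), there is no \(e(v)\). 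Then \(\be\) is paired downward with \(1\x u\).
\end{itemize}

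\textbf{Case (c): \(v\notin e(u)\), so \(\tau(v)\in e(u)\).} The face \(v\x\tau(u)\) is strictly lower, while \(v\x u\) and \(e(v)\x e(u)\) both tie with \(\be\) before modification. Here Lemma \ref{lemma5} decides the outcome according to the order of \(u\) and \(v\).
\begin{itemize}
\item If \(v<u\), the modification raises \(\be\) and \(e(v)\x e(u)\). Then \(\be\) is paired upward and the vertex tie disappears.
\item If \(u<v\), the modification raises \(u\x e(v)\) and \(e(v)\x e(u)\) but leaves \(\be\) unchanged. Then \(e(v)\x e(u)\) is strictly above \(\be\), and \(\be\) is paired downward with \(v\x u\).
\end{itemize}

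In every subcase each of the counts in \eqref{cell_1_condition_1} and \eqref{cell_1_condition_2} is at most \(1\), and exactly one of them equals \(1\). This gives both conditions and shows that \(\be\) is non-critical.

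The main obstacle is the bookkeeping in cases (b) and (c). There the unmodified \(f_2\) has simultaneous ties above and below \(\be\). We must check carefully, using the ordering convention \(u<v\) of Lemma \ref{lemma5} and the fact that \(f_1(\tau(w))<f_1(w)\) forces \(\tau(w)<w\), which of the tied cells actually gets the \(+1\). We must also confirm that no other modified \(1\)-cell, such as \(u\x e(v)\), creates a new tie at the faces of \(\be\).
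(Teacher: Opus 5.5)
Your proposal is correct and follows essentially the same route as the paper. You show that the cofaces $e'\x e(u)$ with $e'\in T_v\cup D_v$ lie strictly above $\be$ even after the $+1$ shift, then resolve the remaining ties with $C_2(e(u))$ or $e(v)\x e(u)$ according to whether Lemma~\ref{lemma5} raised $\be$, with your split into $v=u$, $v=\tau(u)$, $v\notin e(u)$ (finer than the paper's ``modified/unmodified'' split) making explicit, via the label ordering, which cell is raised and treating the coface $C_2(e(u))$ for $v=\tau(u)$ directly, where the paper only absorbs it implicitly into its case (iii).
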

\begin{proof}
Note that \(\be\) could occur as a face of \(2\)-cells of the following forms:

i) \ \(\al = e(v) \x e(u)\)

ii) \ \(\al = e \x e(u), \, e \in D_{v}\)

iii) \ \(\al = e \x e(u), \, e \in T_{v}\)

The value of \(\al\) might have been modified along with one of its faces in (i). In (ii), no value is modified. In (iii), the value of \(\al\) is modified along with a \(1\)-face of \(\al\) different from \(\be\).

We first show that \(\tilde{f}_2(\al) > \tilde{f}_2(\be)\) in cases (ii) and (iii), irrespective of the modifications made in Lemma \ref{lemma5}. Then, we consider the \(2\)-cells of type (i) and the faces of \(\be\).

(ii) \ \(\tilde{f}_{2}(\al) = f_{1}(e) + f_{1}(u) > f_{1}(v) + f_{1}(u) + 1 \geq \tilde{f}_2(\be)\quad \text{since } e\in D_v \text{, therefore } f_1(e)>f_1(v)+1.\)

(iii) \ \(\tilde{f}_2(\al) = f_{1}(e) + f_{1}(u)+1 > f_{1}(v) + f_{1}(u) + 1 \geq \tilde{f}_2(\be)\quad \text{since } e\in T_v.\)
Therefore, \(e=e(w)\) for some \(w\in \operatorname{V}(\g)\), where \(v=\tau(w)\) and
\[
f_2(e)=f_1(w)>f_1(\tau(w))=f_1(v).
\]

In cases (ii) and (iii), we obtain strict inequalities irrespective of the modifications made in Lemma \ref{lemma5}.

Now, we only need to check case (i) and the faces of \(\be\).

In case (i), since \(e(u) \cap e(v) \neq \emptyset\), either \(u=v\) or \(u\neq v\).

If \(u=v\), then \(e(u)=e(v)\), which gives \(\al = C_2(e(v))\). We also note that the value of \(\al\) is not modified in Lemma \ref{lemma6}, and hence
\[
f_2(\al)=f_2(\be).
\]

We only need to check the faces of \(\be\). The faces of \(\be\) are the following cells:
\[
\ka_1 = v \x v \text{ and } \ka_2 = v \x \tau(v).
\]

\[
f_{2}(\ka_1) = -4 < f_{2}(\be),
\]
because \(\ka_1\) is a \(0\)-cell in \(\Delta\).

Also,
\[
f_{2}(\ka_2) < 2f_1(v) = f_{2}(\be).
\]

In this case, the conditions \eqref{cell_1_condition_1} and \eqref{cell_1_condition_2} are satisfied, and \(\be\) is a non-critical cell.

If \(u \neq v\) and the value of \(\be\) was modified along with \(\al\) in Lemma \ref{lemma5}, we get the following relation:
\begin{equation}
\label{lemma6 eqn 1}
\tilde{f}_{2}(\be) = \tilde{f}_{2}(\al) = f_{1}(u) + f_1(v) + 1.
\end{equation}

For the faces of \(\be\), we get the following relation:
\[
\tilde{f}_{2}(\be) > {f}_{2}(v \x u) > {f}_{2}(v \x \tau(u)) \geq -4.
\]

If the value of \(\be\) was not modified in Lemma \ref{lemma5}, then we get
\begin{equation}
\label{lemma 6 eqn 2}
\tilde{f}_{2}(\be) = f_{1}(u) + f_1(v) < f_1(u) + f_1(v) + 1 = \tilde{f}_{2}(\al).
\end{equation}

For the faces of \(\be\), we have
\[
\tilde{f}_{2}(\be) = {f}_{2}(v \x u) > {f}_{2}(v \x \tau(u)) \geq -4.
\]

The purpose of including \(-4\) in the inequality is to emphasize that \(v \x \tau(u)\) could also possibly occur in \(\Delta\).

Therefore, the conditions \eqref{cell_1_condition_1} and \eqref{cell_1_condition_2} are satisfied, and the cell \(\be\) is non-critical.

\end{proof}
\begin{lemma}
    The \(1\)-cells of the form \(\be = v\x e\), where \(e \in \g - T\) and \(e(v)\cap e \neq \emptyset \) satisfy the conditions \eqref{cell_1_condition_1} and \eqref{cell_1_condition_2}. Furthermore, \(\be\) is critical when \(v=1\), otherwise it is non-critical.
\end{lemma}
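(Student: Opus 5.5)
The plan is a direct check, in the style of the lemma for Case 6. For \(\be=v\x e\) I list every \(2\)-cell having \(\be\) as a face and every \(0\)-face of \(\be\), and compare values. Write \(e=(i,j)\) with \(i<j\), so
\[
f_2(\be)=f_1(v)+f_1(j)+2 .
\]
First I observe that none of the relevant values were changed in Lemma \ref{lemma5}. That lemma only modifies cells of the form \(e(u)\x e(w)\) and \(u\x e(w)\) with both edges in \(T\). Since \(e\in\g-T\), neither \(\be\) nor any \(2\)-cell containing \(\be\) is affected. Hence \(\tilde f_2=f_2\) on all cells involved.

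Next I treat the cofaces. The \(2\)-cells containing \(\be\) are \(e'\x e\) with \(e'\) an edge incident to \(v\), together with \(C_2(e)\) when \(v\) is an endpoint of \(e\). I split the edges \(e'\) at \(v\) using the partition \(\{e(v)\}\cup T_v\cup D_v\) from Figure~\ref{fig:vertex-v}.
\begin{itemize}
\item If \(e'=e(v)\), then \(f_2(e(v)\x e)=f_1(v)+f_1(j)+2=f_2(\be)\). This is exactly the equality found in Lemma \ref{lemma4}.
\item If \(e'\in T_v\), then \(e'=e(w)\) with \(\tau(w)=v\). Thus \(f_1(e')=f_1(w)>f_1(v)\), and the value is strictly larger than \(f_2(\be)\).
\item If \(e'\in D_v\) with \(e'\neq e\), then \(f_1(e')\geq f_1(v)+2>f_1(v)\), so again the inequality is strict.
\item If \(e'=e\), the cell is \(C_2(e)\), with value \(2f_1(j)+4>f_1(v)+f_1(j)+2\), since \(f_1(v)\le f_1(j)+2\) whenever \(v\) lies on \(e\) or is adjacent to it.
\end{itemize}
So at most one coface, namely \(e(v)\x e\), has value \(\le f_2(\be)\). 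This gives \eqref{cell_1_condition_1}.

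For the faces, the \(0\)-faces of \(\be\) are \(v\x i\) and \(v\x j\). Either of them may lie in \(\Delta\), in which case its value is \(-4\). Otherwise their values are \(f_1(v)+f_1(i)\) and \(f_1(v)+f_1(j)\), both strictly less than \(f_2(\be)\). So no face has value \(\geq f_2(\be)\), and \eqref{cell_1_condition_2} holds with count \(0\).

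Criticality then follows. If \(v\neq1\), the edge \(e(v)\) exists, and the coface \(e(v)\x e\) attains equality. Hence \(\be\) is paired with it and is non-critical, consistent with the arrow drawn in Lemma \ref{lemma4}. If \(v=1\), there is no edge \(e(1)\), because of the correspondence \(\operatorname{V}(\g)-\{1\}\leftrightarrow E(T)\). In that case every coface and every face inequality is strict, so \(\be\) is critical.

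The main obstacle I expect is the careful enumeration of the cofaces. One must correctly identify when \(\be\) is a face of the triangle \(C_2(e)\); this depends on the lower-half convention and on whether \(v\in\{i,j\}\). One must also confirm that the \(D_v\) and \(T_v\) bounds stay strict in the boundary cases where \(e'\) shares a vertex with \(e\). I would settle these by writing out the possible positions of \(v\) relative to \(i\) and \(j\) explicitly.
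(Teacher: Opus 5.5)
Your proposal is correct and follows essentially the same route as the paper. Both arguments list the cofaces $e(v)\x e$, $e'\x e$ with $e'\in T_v$, and $e'\x e$ with $e'\in D_v$, note that Lemma \ref{lemma5} modifies none of them, find equality only at $e(v)\x e$ with strict inequalities at the other cofaces and at both $0$-faces, and decide criticality by whether $e(v)$ exists. Your separate treatment of $C_2(e)$, when $v$ is an endpoint of $e$, is just the $e'=e\in D_v$ instance of the paper's case (ii), written out explicitly.
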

\begin{proof}
Let \(e=(i,j)\), where \(i<j\).

Note that \(\be\) could occur as a face of \(2\)-cells of the following forms:

i) \ \(\al = e(v) \x e\)

ii) \ \(\al = e_i \x e, \, e_i \in D_{v}\)

iii) \ \(\al = e_i \x e, \, e_i \in T_{v}\)

Note that the values of \(f_2\) on \(\al\) and \(\be\) were not modified in cases (i), (ii), and (iii).

In case (i),
\[
{f}_{2}(\al) = f_{1}(v) + {f}_{1}(e) = {f}_{2}(\be).
\]

In cases (ii) and (iii),
\[
{f}_{2}(\al) = f_1(e_i)+f_1(e)> f_{1}(v) + {f}_{1}(e) = {f}_{2}(\be),
\]
because
\[
f_1(e_i)>f_1(v) \quad \text{for } e_i\in D_v \text{ or } e_i \in T_v.
\]

For the boundary of \(\be\), we have
\[
\tilde{f}_{2}(v \x i), \tilde{f}_{2}(v \x j)
< f_{1}(v) + {f}_{1}(e)
= \tilde{f}_{2}(\be).
\]

Therefore, the conditions \eqref{cell_1_condition_1} and \eqref{cell_1_condition_2} are satisfied.

If \(v \neq 1\), then \(e(v)\) exists, and hence the \(2\)-cell in case (i) is valid. Therefore, \(\be\) is non-critical.

If \(v=1\), then \(\be\) is critical.
\end{proof}

\begin{lemma}
    The \(1\)-cells of the form \(\be = v\x e(u)\), where \(e(u)\in T\) and \(e(u)\cap e(v) = \emptyset \), satisfy conditions \eqref{cell_1_condition_1} and \eqref{cell_1_condition_2}, and are non-critical.
\end{lemma}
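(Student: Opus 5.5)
The plan is to follow the template of the two preceding lemmas. I will list all \(2\)-cells having \(\be = v\x e(u)\) as a face and the two \(0\)-faces of \(\be\). Then I compare \(\tilde f_2\)-values, taking into account exactly which values were changed in Lemma \ref{lemma5}. Write \(e(u) = (u,\tau(u))\).

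The hypothesis \(e(u)\cap e(v)=\emptyset\) gives \(v\notin\{u,\tau(u)\}\). Hence both faces \(\ka_1 = v\x u\) and \(\ka_2 = v\x\tau(u)\) lie in \(C_2(\g)\), with
\[
f_2(\ka_1) = f_1(v)+f_1(u) = f_2(\be), \qquad f_2(\ka_2) < f_2(\be),
\]
since \(f_1(\tau(u))<f_1(u)\). The \(2\)-cells containing \(\be\) are of three kinds:
\begin{enumerate}
\item[(i)] \(\al = e(v)\x e(u)\), which exists only when \(v\neq 1\);
\item[(ii)] \(\al = e\x e(u)\) with \(e\in D_v\);
\item[(iii)] \(\al = e\x e(u)\) with \(e\in T_v\).
\end{enumerate}
For (ii) and (iii), the computation in the lemma on Case 6 applies verbatim. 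It gives \(f_1(e)\geq f_1(v)+2\), so \(\tilde f_2(\al)\geq f_2(\al) > f_2(\be)+1\geq\tilde f_2(\be)\), regardless of any modification from Lemma \ref{lemma5}. Thus only (i) and the face \(\ka_1\) can cause an inequality to become non-strict.

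Next I split on the modification in Lemma \ref{lemma5} applied to \(\al=e(v)\x e(u)\), which raises the face indexed by the smaller of \(u,v\).
\begin{itemize}
\item If \(v<u\), then \(\be = v\x e(u)\) is the modified face. Thus \(\tilde f_2(\be)=\tilde f_2(\al)=f_1(u)+f_1(v)+1\), and both \(\ka_1,\ka_2\) are strictly smaller. So \eqref{cell_1_condition_1} holds with exactly one coface (namely \(\al\)), \eqref{cell_1_condition_2} holds with none, and \(\be\) is non-critical, paired upward with \(\al\).
\item If \(u<v\), then \(\be\) is unmodified and \(\tilde f_2(\al)=f_2(\be)+1>\tilde f_2(\be)\), so no coface violates \eqref{cell_1_condition_1}. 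On the other hand \(\tilde f_2(\ka_1)=f_2(\be)\). Here I must check that \(0\)-cells are never modified, which is true since only \(1\)- and \(2\)-cells change in Lemma \ref{lemma5}. So exactly one face satisfies \(\tilde f_2(\ka)\geq\tilde f_2(\be)\), and \(\be\) is non-critical, paired downward with \(v\x u\).
\end{itemize}

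The case \(v=1\), where \(e(v)\) does not exist and type (i) is absent, should be treated the same way as the case \(u<v\). In this case \(\be=1\x e(u)\) is never modified, all cofaces are strictly larger, and \(\ka_1 = 1\x u\) ties with \(\be\). So \(\be\) is again non-critical via a downward pairing.

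The main obstacle I expect is bookkeeping rather than inequalities. I need to make sure that the value \(\tilde f_2(\ka_1)\) is the unmodified one, and that in the case \(u<v\) the tie with \(\ka_1\) does not conflict with \(\ka_1\)'s own condition \eqref{cell_0_condition}. That check is deferred to Step 3. For the present lemma, though, I will at least remark that exactly one of \eqref{exclusive condition 1} and \eqref{exclusive condition 2} occurs in each case, which confirms non-criticality.
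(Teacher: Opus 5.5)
Your proposal is correct and takes essentially the same route as the paper. You use the same three families of cofaces, the bound $f_1(e)\geq f_1(v)+2$ for $e\in D_v\cup T_v$, and a split on whether $\beta$ was raised in Lemma~\ref{lemma5}, pairing $\beta$ upward with $e(v)\times e(u)$ or downward with $v\times u$. Your explicit criterion ($v<u$ versus $u<v$) and your separate treatment of $v=1$ just make precise what the paper phrases as ``modified / not modified.''
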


\begin{proof}
The cell \(\be\) could occur as a face of \(2\)-cells of the following forms:

i) \(\al =e(v)\x e(u)\)

ii) \(\al =e\x e(u),\, e \in D_v\)

iii) \(\al =e\x e(u),\, e\in T_v\)

For the \(2\)-cells in (ii), the values of \(f_2\) on the cells \(\al\) were not modified. Similarly, in (iii), the value of \(f_2\) on \(\al\) was modified along with one of its faces different from \(\be\), because \(e \in T_v\). Hence, there exists some \(w\in \operatorname{V}(\g)\) such that \(v=\tau(w)\). The value of \(f_2\) was modified on \(\al\) and possibly on \(w\x e(u)\), but not on
\[
\tau(w)\x e(u)=v\x e(u)
\]
by Lemma \ref{lemma5}. Finally, in (i), the value of \(f_2\) was modified on \(\al\) along with either \(v \x e(u)\) or \(u\x e(v)\).

\textbf{If the values of \(\al\) and \(\be\) were modified in case (i):}

\begin{equation}
\label{lemma9eqn1}
\tilde{f}_2(\al)= \tilde{f}_2(\be)= f_1(v)+f_1(u)+1.
\end{equation}

In cases (ii) and (iii), we get
\begin{equation}
\label{lemma9eqn2}
f_2(\al) = f_1(e) + f_1(u) \geq f_1(v) + f_1(u) + 2
> f_1(v) + f_1(u) + 1 = \tilde{f}_2(\be),
\end{equation}
because \(f_1(e)\geq f_1(v)\) for \(e\in D_v\) or \(e\in T_v\).

Therefore, \(\be\) satisfies condition \eqref{cell_1_condition_1}.

Now, for the faces of \(\be\), we get the following inequalities:
\begin{equation}
\label{lemma9eqn3}
f_2(v\x u)= f_1(u)+f_1(v) < \tilde{f}_2(\be),
\end{equation}
and
\begin{equation}
\label{lemma9eqn4}
f_2(v \x \tau(u)) = f_1(v) + f_1(\tau(u))
< f_1(v) + f_1(u) < \tilde{f}_2(\be).
\end{equation}

Hence, condition \eqref{cell_1_condition_2} is satisfied.

Note that if the value of \(\be\) was not modified in Lemma \ref{lemma5}, then the equality in \eqref{lemma9eqn1} becomes a strict inequality, the inequality in \eqref{lemma9eqn2} remains a strict inequality, the inequality in \eqref{lemma9eqn3} becomes an equality, and \eqref{lemma9eqn4} remains a strict inequality. Therefore, in both cases, the conditions \eqref{cell_1_condition_1} and \eqref{cell_1_condition_2} are satisfied, and \(\be\) is non-critical in either situation.
\end{proof}
\begin{lemma}
    The \(1\)-cells of the form \(\be = v\x e\), where \(e \in \g - T\) and \(e(v)\cap e = \emptyset \) satisfy conditions \eqref{cell_1_condition_1} and \eqref{cell_1_condition_2} and are critical when \(v = 1\), otherwise are non-critical.
\end{lemma}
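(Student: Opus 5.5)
The argument follows the pattern of the previous lemma, for \(\be = v\x e\) with \(e=(i,j)\), \(i<j\), \(e\in\g-T\) and \(e\cap e(v)=\emptyset\). We first list the \(2\)-cells having \(\be\) as a face:
\begin{enumerate}
\item[i)] \(\al = e(v)\x e\), which exists only when \(v\neq 1\);
\item[ii)] \(\al = e'\x e\) with \(e'\in D_v\);
\item[iii)] \(\al = e'\x e\) with \(e'\in T_v\).
\end{enumerate}
Since \(e\notin T\), none of these \(2\)-cells is of the form treated in Lemma \ref{lemma5}. The cell \(\be\) is not of the form \(u\x e(w)\) either, so \(\tilde f_2=f_2\) on \(\al\) and on \(\be\). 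The disjointness hypothesis ensures that \(e(v)\ne e\), so we really get a product cell and not a diagonal \(C_2\)-type cell. It also ensures that every cell involved lies in \(C_2(\g)\) rather than meeting \(\Delta\) in an unexpected way, since \(v\notin\{i,j\}\).

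Next we compare the values of \(f_2\). In case (i) Lemma \ref{lemma4} gives
\[
f_2(\al)=f_1(e(v))+f_1(e)=f_1(v)+f_1(e)=f_2(\be).
\]
This is the unique coface with value \(\ge f_2(\be)\), and it is the one paired with \(\be\). In cases (ii) and (iii) we have \(f_1(e')>f_1(v)\): for \(e'\in D_v\) this holds because \(f_1(e')\ge f_1(v)+2\), and for \(e'\in T_v\) because \(e'=e(w)\) with \(\tau(w)=v\), so \(f_1(e')=f_1(w)>f_1(v)\). Hence
\[
f_2(\al)=f_1(e')+f_1(e)>f_1(v)+f_1(e)=f_2(\be)
\]
in these cases, and condition \eqref{cell_1_condition_1} holds.

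For the faces of \(\be\), namely \(v\x i\) and \(v\x j\), we use \(f_1(e)=\max\{f_1(i),f_1(j)\}+2\). This gives
\[
f_2(v\x i),\ f_2(v\x j)\le f_1(v)+f_1(j)<f_2(\be).
\]
We must also check that these \(0\)-cells keep their values: only values of \(1\)- and \(2\)-cells were changed in Lemma \ref{lemma5}, so nothing was modified here. Thus condition \eqref{cell_1_condition_2} holds strictly.

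Finally we read off criticality. When \(v=1\) there is no \(e(1)\), so case (i) does not occur. Then every inequality above is strict and \(\be\) is critical. When \(v\neq1\), the coface \(e(v)\x e\) gives exactly one equality, so \(\be\) is non-critical. The main point needing care is the bookkeeping: we must confirm that no modification from Lemma \ref{lemma5} touches \(\be\) or its cofaces. This follows because every coface contains the non-tree factor \(e\).
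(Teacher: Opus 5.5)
Your proposal is correct and follows the paper's proof essentially step by step. It uses the same three families of cofaces and the same equality on $e(v)\times e$ against strict inequalities from $f_1(e')>f_1(v)$, the same strict inequalities on the faces $v\times i$ and $v\times j$, and the same conclusion that $\beta$ is critical exactly when $v=1$ because $e(1)$ does not exist; your extra check that no change from Lemma \ref{lemma5} touches these cells is a welcome addition, but note that $e(v)\neq e$ holds automatically since $e(v)\in T$ and $e\notin T$, and what the disjointness hypothesis really supplies is $v\notin\{i,j\}$.
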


\begin{proof}
The cell \(\be\) could occur as a face of \(2\)-cells of the following forms:

(i) \(\al =e(v)\x e\)

(ii) \(\al =e_i\x e,\, e_i \in D_v\)

(iii) \(\al =e_i\x e,\, e_i\in T_v\)

In all cases (i), (ii), and (iii), the values of \(f_2\) on \(\al\) and \(\be\) were not modified in Lemma \ref{lemma5}. Therefore, we get
\[
{f_2}(\be)=f_1(v)+f_1(e).
\]

In case (i), we have
\[
f_2(\al)=f_1(v)+f_1(e)=f_2(\be).
\]

In cases (ii) and (iii), we have
\[
f_2(\al)=f_2(e_i\x e)=f_1(e_i)+f_1(e)>f_1(v)+f_1(e)=f_2(\be),
\]
because \(f_1(e_i)>f_1(v)\) whenever \(e_i\in D_v\) or \(e_i\in T_v\).

Therefore, condition \eqref{cell_1_condition_1} is satisfied.

Now, let \(e=(i,j)\). The faces of \(\be\) are of the form \(v\x i\) and \(v\x j\). Note that
\[
f_1(e)>f_1(i) \quad \text{and} \quad f_1(e)>f_1(j).
\]
For each \(0\)-cell, we get
\[
f_2(v\x i)=f_1(v)+f_1(i)<f_1(v)+f_1(e)=f_2(\be),
\]
and
\[
f_2(v\x j)=f_1(v)+f_1(j)<f_1(v)+f_1(e)=f_2(\be).
\]

Hence, condition \eqref{cell_1_condition_2} is also satisfied. Moreover, we note that
\[
\be \text{ is a critical cell } \iff v=1.
\]
\end{proof}
\begin{lemma}
    The \(0\)-cells of the form \(\ka = u\x v \), where \(u\neq v\) and \(u\neq 1, v \neq 1\) satisfy condition \eqref{cell_0_condition} and are non-critical.
\end{lemma}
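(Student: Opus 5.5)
The plan is to list every $1$-cell of $C_2(\g)$ that has $\ka=u\x v$ as a face, and compare its $\tilde f_2$-value with $\tilde f_2(\ka)=f_2(\ka)=f_1(u)+f_1(v)$. Since $\operatorname{Sym}^2(\g)$ is unordered, we may assume $u<v$. No modification in Lemma \ref{lemma5} touches $0$-cells, so $\tilde f_2(\ka)$ is unchanged. The cofaces of $\ka$ in $C_2(\g)$ are exactly the cells $u\x e'$ with $e'$ an edge incident to $v$, together with the cells $v\x e''$ with $e''$ an edge incident to $u$. By the decomposition around a vertex (Figure \ref{fig:vertex-v}), the edges at $v$ split into $e(v)$, $T_v$ and $D_v$, and similarly at $u$. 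Because $u,v\neq 1$, both $e(u)$ and $e(v)$ exist. Because $u\neq v$, these are two distinct tree edges.

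\textbf{Step 1: the cofaces along $T_v$, $D_v$, $T_u$, $D_u$.} For $e'\in T_v$, the same argument as in the earlier lemmas gives $e'=e(w)$ with $\tau(w)=v$, hence $f_1(e')=f_1(w)>f_1(v)$. For $e'\in D_v$ we have $f_1(e')=\max+2>f_1(v)$. So $f_2(u\x e')>f_2(\ka)$. Modifications from Lemma \ref{lemma5} only add $+1$, so the strict inequality $\tilde f_2(u\x e')>\tilde f_2(\ka)$ persists. The same holds symmetrically for $v\x e''$ with $e''\in T_u\cup D_u$.

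\textbf{Step 2: the two cofaces $u\x e(v)$ and $v\x e(u)$.} Before modification both have value $f_1(u)+f_1(v)=f_2(\ka)$, so both violate strictness. This is the key point.

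Consider the $2$-cell $\al=e(u)\x e(v)$. It is a genuine $2$-cell of $C_2(\g)$ because $e(u)\neq e(v)$, even when the two edges share a vertex (e.g.\ $u=\tau(v)$). Lemma \ref{lemma5} applies to $\al$. Since $u<v$, it raises $\tilde f_2(u\x e(v))$ to $f_1(u)+f_1(v)+1>\tilde f_2(\ka)$. It leaves $\tilde f_2(v\x e(u))=\tilde f_2(\ka)$.

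I must also check that $v\x e(u)$ is not raised by some other application of Lemma \ref{lemma5}. Such an application would require a $2$-cell $e(u)\x e(w)$ with $v$ being the smaller-index top vertex $w$, which forces $w=v$, i.e.\ the same cell $\al$. In that cell, the modified face is $u\x e(v)$, not $v\x e(u)$. Hence exactly one coface, $v\x e(u)$, has $\tilde f_2\geq\tilde f_2(\ka)$.

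\textbf{Conclusion and main obstacle.} Condition \eqref{cell_0_condition} therefore holds with count exactly $1$, so $\ka$ is non-critical, being paired with $v\x e(u)$. The main obstacle is the bookkeeping in Step 2. It has two parts. First, one must confirm that both tie-cells are honest cells of $C_2(\g)$ in the degenerate adjacent case $u=\tau(v)$ (or $v=\tau(u)$). Second, one must confirm that the single $+1$ modification of Lemma \ref{lemma5} on $\al$ breaks exactly one of the two ties, and that no other modification affects $v\x e(u)$. Both facts follow from the bijection $v\leftrightarrow e(v)$ and from the convention $u<v$ fixed in Lemma \ref{lemma5}.
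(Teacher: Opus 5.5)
Your proposal is correct and follows essentially the same argument as the paper. Both enumerate the cofaces of $\ka$, show strict inequality for those built from edges in $T_v\cup D_v$ and $T_u\cup D_u$, find the two ties $u\x e(v)$ and $v\x e(u)$, and use the $+1$ modification of Lemma \ref{lemma5} (with $u<v$) to break exactly one of them. Your extra checks — that $v\x e(u)$ is never raised by any other application of Lemma \ref{lemma5}, and that the adjacent cases $u=\tau(v)$ or $v=\tau(u)$ cause no trouble — are details the paper leaves implicit.
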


\begin{proof}
The value of \(f_2\) on \(\ka\) is given by
\[
{f}_{2}(\ka)=f_{1}(u)+f_{1}(v).
\]
Furthermore, \(\ka\) could occur as the boundary of the following cells:

1) \ \(u \x e(v)\):
\begin{equation}
\label{lemma 10 eqn 1}
{f}_{2}(u \x e(v))={f}_{2}(\ka).
\end{equation}

2) \ \(u \x e,\, e \in {T}_{v}\) or \(e \in D_{v}\):
\[
{f}_{2}(u \x e)=f_{1}(u)+f_{1}(e)>f_{1}(u)+f_{1}(v)={f}_{2}(\ka).
\]

3) \ \(v \x e(u)\):
\begin{equation}
\label{lemma 10 eqn 2}
{f}_{2}(v \x e(u))=f_{1}(u)+f_{1}(v)={f}_{2}(\ka).
\end{equation}

4) \ \(v \x e,\; e \in {T}_{u}\) or \(e \in D_{u}\):
\[
{f}_{2}(v \x e)=f_{1}(v)+f_{1}(e)>f_{1}(v)+f_{1}(u)={f}_{2}(\ka).
\]

We obtain equalities in \eqref{lemma 10 eqn 1} and \eqref{lemma 10 eqn 2}. We note that in Lemma \ref{lemma5}, we considered \(2\)-cells of the form
\[
\al=e(u)\x e(v),
\]
where \(u\neq v\). assume \(u<v\), then the value of \(\al\) together with one of its faces \(u\x e(v)\) is modified.

Therefore, one of the equalities becomes a strict inequality. Hence, condition \eqref{cell_0_condition} is satisfied, and \(\ka\) is a non-critical cell.
\end{proof}

\begin{lemma}
    The \(0\)-cells of the form \(\ka = 1\x v \), where \(v\neq 1\) satisfy condition \eqref{cell_0_condition} and are non-critical.
\end{lemma}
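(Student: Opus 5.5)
I would mirror the proof of the preceding lemma for \(0\)-cells \(u\x v\) with \(u,v\neq 1\), but account for the fact that vertex \(1\) has no edge \(e(1)\). First record \(f_2(\ka)=f_1(1)+f_1(v)=f_1(v)\), since \(f_1(1)=0\). This value is unmodified, because Lemma \ref{lemma5} only changes values on \(2\)-cells and on \(1\)-cells of the form \(u\x e(v)\).

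Next I would list every \(1\)-cell of \(C_2(\g)\) having \(\ka\) as a face. There are four groups.
\begin{enumerate}
\item[1)] \(1\x e(v)\), with \(f_2(1\x e(v))=f_1(v)=f_2(\ka)\).
\item[2)] \(1\x e\) with \(e\in T_v\) or \(e\in D_v\). Here \(f_1(e)>f_1(v)\), so \(f_2(1\x e)>f_2(\ka)\), exactly as in the previous lemma.
\item[3)] \(v\x e\) with \(e\) incident to \(1\). Since \(1\) has valency one in \(\g\), this edge lies in \(T\) and ends at \(1\); it is \(e(w)\) with \(\tau(w)=1\), so \(e\in T_1\). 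Then \(f_2(v\x e)=f_1(v)+f_1(w)>f_1(v)\), because \(f_1(w)>0\).
\item[4)] No cell \(v\x e(1)\), because \(e(1)\) does not exist.
\end{enumerate}
One could also argue group 3 directly: any edge at \(1\) has \(f_1\)-value at least \(2\), which gives the same strict inequality.

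The key point is that only group 1 gives an equality. I then check whether \(1\x e(v)\) was raised in Lemma \ref{lemma5}. It could only be raised as the cell \(u\x e(v)\) of some \(2\)-cell \(e(u)\x e(v)\) with \(u=1\), and that would require \(e(1)\) to exist, which it does not. So \(\tilde f_2(1\x e(v))=\tilde f_2(\ka)\), and this is the only \(1\)-cell \(\be>\ka\) with \(\tilde f_2(\be)\le \tilde f_2(\ka)\). Condition \eqref{cell_0_condition} therefore holds with count exactly one, and \(\ka\) is non-critical, paired with \(1\x e(v)\).

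One case needs care: when \(v\) is adjacent to \(1\), \(e(v)\) might contain \(1\), so that \(e(v)\) is the edge at \(1\). This happens only if \(\tau(v)=1\). Then the cell in group 1 coincides with an edge at \(1\) from group 3 viewed from the other side, namely \(1\x e(v)\) versus \(v\x e(v)\). These are distinct cells: \(v\x e(v)\) has faces \(v\x v\in\Delta\) and \(v\x 1\), and it belongs to group 3 with \(w=v\), so its value is \(2f_1(v)>f_1(v)\). The conclusion is unaffected. The main obstacle is making sure this enumeration is exhaustive and that no modification from Lemma \ref{lemma5} touches \(1\x e(v)\); I would settle the latter by the observation that \(e(1)\) does not exist.
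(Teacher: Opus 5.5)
Your proof is correct and follows the paper's argument: enumerate the $1$-cells having $1\times v$ as a face, observe that only $1\times e(v)$ ties with $\tilde f_2(1\times v)=f_1(v)$, and use the nonexistence of $e(1)$. Your explicit check that $1\times e(v)$ is never raised in Lemma~\ref{lemma5} is left implicit in the paper, and your conclusion ``non-critical'' is the intended one (the word ``critical'' at the end of the paper's proof is a typo).

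One caution: vertex $1$ is a leaf of $T$, not of $\Gamma$. In the house graph it meets three non-tree edges, and the paper's list explicitly includes $v\times e$ with $e\in D_1$. So your claim that every edge at $1$ lies in $T_1$ is false in general, and you should rely on your alternative bound $f_1(e)\geq 2$ for all edges $e$ at $1$, which covers $D_1$ as well.
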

\begin{proof}
The value of \(f_2\) on \(\ka\) is given by
\[
{f}_2(1 \x v) = {f}_1(1) + {f}_1(v).
\]
Furthermore, \(\ka\) could occur as a face of the following \(1\)-cells:

1) \(v\x e\), with \(e \in D_1\) or \(e \in T_1\):
\[
{f}_2(v \x e) = {f}_1(v) + {f}_1(e) > {f}_2(\ka),
\]
because \({f}_1(e) > {f}_1(1)\).

2) \(1\x e\), with \(e \in D_v\) or \(e \in T_v\):
\[
{f}_2(1 \x e) = {f}_1(1) + {f}_1(e) > {f}_2(\ka),
\]
because \({f}_1(e) > {f}_1(v)\).

3) \(1\x e(v)\):
\[
{f}_2(1 \x e(v)) = {f}_1(1) + {f}_1(v) = {f}_2(\ka).
\]

Note that there is no \(1\)-cell of the form \(e(1)\), because \(1\) is the only critical \(1\)-cell of \(\g\).

Therefore, \(\ka\) satisfies condition \eqref{cell_0_condition} and is critical.
\end{proof}
We conclude that \(\tilde{f}_2\) gives a discrete Morse function on \(\operatorname{Sym}^2(\g)\) with all cells in \(\Delta\) being critical. Equivalently, the restriction of \(\tilde{f}_2\) to \(C_2(\g)\) gives an open discrete Morse function \cite{KnudsonScoville2026}.

\section{Computation of Borel--Moore Homology for \texorpdfstring{$C_2(\g)$}{C2(g)}}

We summarize the information about the critical cells determined by the open discrete Morse function in the following remark.

\begin{remark}
\label{remark2}
\begin{itemize}
    \item A \(2\)-cell \(\al\) of \(C_2(\g)\) is critical \(\iff\) either \(\al=e\x \tilde{e}\) for some distinct \(1\)-cells \(e,\tilde{e} \in \g -T\) or \(\al=C_2(e)\) for some \(1\)-cell \(e\in\g-T\).
    \item A \(1\)-cell \(\be\) of \(C_2(\g)\) is critical \(\iff\) \(\be=1\x e\) for some \(1\)-cell \(e\in \g - T\).
    \item None of the \(0\)-cells is critical.
\end{itemize}
\end{remark}

Using this remark, we obtain the following Morse complex:
\[
\cdots \xrightarrow{0}
0
\xrightarrow{\partial_{3}}
\mathcal{M}_2
\xrightarrow{\partial_2}
\mathcal{M}_{1}
\xrightarrow{0}
0,
\]
where
\[
\mathcal{M}_2=\langle e\x\tilde{e},C_2(\bar e)\mid e,\tilde{e},\bar e\in\g-T \text{ with }e\neq\tilde{e}\rangle,\qquad
\mathcal{M}_1=\langle 1\x e\mid e\in\g-T\rangle,
\]
and \(\mathcal{M}_n=0\) for \(n=0\) or \(n\ge3\).

We also observe the following:
\[
|\mathcal{M}_2|=\frac{r(r+1)}{2}
\quad\text{and}\quad
|\mathcal{M}_1|=r,
\qquad\text{where } r=|\g-T|,\text{ i.e., the number of }1\text{-cells in }\g-T.
\]

\begin{remark}
\label{remark on betti numbers}
If \(\g\) is connected, then \(r\) is equal to the first Betti number of \(\g\), i.e., \(r=b_1\), where \(b_1\) denotes the first Betti number of \(\g\) (equivalently, the number of holes in \(\g)\).
\end{remark}
Also note that, in this case 
\[
b_1=|\operatorname{E}(\g)|-|\operatorname{V}(\g)|+1.
\]

We immediately note that if \(\g\) is a tree, then, by Remark~\ref{remark2}, the discrete Morse function defined above has no critical cells in any dimension. Therefore,
\[
H^{BM}_n(C_2(\g))\cong 0,\qquad \forall\, n\ge0.
\]

Based on Lemma~\ref{lemma4}, we make the following remark.

\begin{remark}
\label{remark1}
The cells \(\{v \x e,\; e(v) \x e\}\) form a regular pair for each \(v \neq 1\) and \(e\in \g-T\).
\end{remark}
Using Remark~\ref{remark2}, we note that any gradient upper path starting with a \(1\)-cell can only contain \(2\)-cells of the following forms:

\textbf{Type 1:} \(e(v) \x e(u)\), where \(e(v), e(u) \in T\),

\textbf{Type 2:} \(e(v) \x e\), where \(e(v) \in T\) and \(e \in \g - T\).

We also identify \(1\)-cells as follows:

\textbf{Type a:} \(v\x e(u)\), where \(e(u) \in T\),

\textbf{Type b:} \(v\x e\), where \(e \in \g - T\).

By Remark \ref{remark1}, a \(1\)-cell of \textbf{Type a} with \(v\neq1\) can only be followed by a 2-cell of \textbf{Type 1}, which is followed by a 1-cell of \textbf{Type a} again, and so on. We note that cells of \textbf{Type b} cannot occur in this path.

Therefore, a path containing 1-cells of \textbf{Type a} does not lead to a critical 1-cell, because critical 1-cells belong to \textbf{Type b}.

Now, given a \(2\)-critical cell, its boundary contains only cells of \textbf{Type b}, and there is only one unique path avoiding 1-cells of type a, which leads to a critical 1-cell.

Let \(v\x e\) be a cell of \textbf{Type b}, with \(v\neq 1\)(when \(v=1\), we get the constant path); the path starting with \(v \x e\) and ending with a critical \(1\)-cell is unique and is as follows:
\[
\gamma = v \x e \nearrow e(v) \x e \searrow \tau(v) \x e \nearrow e(\tau(v)) \x e\searrow \cdots
\nearrow e(2)\x e\searrow 1 \x e. 
\]
Note that the sequence \(v,e(v),\tau(v),e(\tau(v)),\cdots\) eventually leads to \(1\x e\). 
It is easy to see that the multiplicity of this path \(\mu(\gamma) = 1\), because for each elementary upper path of the form \(w\x e \nearrow e(w)\x e \searrow \tau \left(w\right)\x e\), we get:
\[
\mu\left(w\x e \nearrow e(w)\x e \searrow \tau \left(w\right)\x e\right)=-i_{w\x e , e(w)\x e }\cdot i_{\tau(w)\x e,e(w)\x e}=1,
\]
 where \(i\) is the incidence number. Using multiplicity of \(\mu\), we conclude \(\mu(\gamma)=1\) for all such paths.

\textbf{1-cells in boundary of the 2-critical cells:}

Let \(\al \in C_2(\g)\) be a \(2\)-critical cell. If \(\al =e\x\tilde{e}\), with \(e, \tilde{e} \in \g - T\) such that \(e \neq \tilde{e}\).
Then

\begin{equation}
\label{eq:Morse_boundary_1}
 \partial_2\, \al = -1 \x \tilde{e} - 1 \x e + 1 \x \tilde{e} + 1 \x e = 0,
\end{equation}
considering the multiplicity and orientation.

If \(\al = C_2(e)\) for some \(e\in C_2(\g)\), then
\begin{equation}
\label{eq:Morse_boundary_2}
\partial_2 \,\al = 2(1 \x e) \text{ or }\partial_2\, \al = - 2(1 \x e),
\end{equation}
because, both the \(1\)-faces \(i\x e\) and \(j\x e\) of \(\al\) in the open complex have same incidence numbers.

\begin{theorem}
Let \(\g\) be a simple and connected graph. Then the Borel--Moore homology of the corresponding unordered \(2\)-particle configuration space \(C_2(\g)\) is given by
\[
H^{BM}_{n}(C_2(\g)) \cong
\begin{cases}
\mathbb{Z}^{\frac{b_1(b_1-1)}{2}}, & n=2,\\[4pt]
\mathbb{Z}_2^{\,b_1}, & n=1,\\[4pt]
0, & \text{otherwise}.
\end{cases}
\]
\end{theorem}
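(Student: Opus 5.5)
The plan is to compute the homology of the Morse complex \(\mathcal{M}(\tilde f_2)\) directly and then transfer the result via Theorem~\ref{Thm:knudson}. By Remark~\ref{remark2}, the complex is concentrated in degrees \(1\) and \(2\):
\[
0\longrightarrow \mathcal{M}_2\xrightarrow{\partial_2}\mathcal{M}_1\longrightarrow 0 .
\]
Here \(\mathcal{M}_1\cong\mathbb{Z}^{r}\) has basis \(\{1\x e\mid e\in\g-T\}\). The group \(\mathcal{M}_2\) has basis consisting of the cells \(e\x\tilde e\) for unordered pairs of distinct \(e,\tilde e\in\g-T\) (so \(\binom{r}{2}\) of them) together with the \(r\) cells \(C_2(e)\). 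Hence \(H_n(\mathcal{M})=0\) for \(n\neq 1,2\) at once. This covers \(n=0\) in particular, since there are no critical \(0\)-cells. By Remark~\ref{remark on betti numbers}, \(r=b_1\).

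The key input is the boundary formulas \eqref{eq:Morse_boundary_1} and \eqref{eq:Morse_boundary_2}.
\begin{itemize}
\item For \(e\neq\tilde e\), every face of \(e\x\tilde e\) is a Type b cell. By the uniqueness of gradient paths established above, each face \(v\x e\) flows along a path of multiplicity \(1\) to \(1\x e\). The two parallel faces \(i\x\tilde e\) and \(j\x\tilde e\) appear with opposite incidence, so their contributions cancel, and likewise for the faces involving \(e\). Thus \(\partial_2(e\x\tilde e)=0\).
\item For \(C_2(e)\), the diagonal face \(D(e)\) lies in \(\Delta\) and is discarded in the open complex. The two remaining faces \(i\x e\) and \(j\x e\) both flow to \(1\x e\) with the same sign, so \(\partial_2 C_2(e)=\pm 2\,(1\x e)\).
\end{itemize}

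I expect this sign check to be the main obstacle. One has to verify that in the triangle \(C_2(e)\), with the orientation induced from \(e\x e\) and the identification \(\operatorname{Sym}^2\), the incidence numbers of \(i\x e\) and \(j\x e\) agree. This should follow because the swap map reverses the orientation of one of the two edges. The check must also be consistent with the path multiplicities \(\mu=1\) computed before the theorem. I would first fix explicit orientations for a single triangle and confirm the signs there.

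Granting these formulas, the rest is linear algebra. In the basis above, \(\partial_2\) has zero columns for the \(\binom{r}{2}\) cells \(e\x\tilde e\), and a diagonal \(r\times r\) block \(\operatorname{diag}(\pm2,\dots,\pm2)\) from the cells \(C_2(e)\) onto \(\mathcal{M}_1\).
\begin{itemize}
\item \(\ker\partial_2\) is free on the cells \(e\x\tilde e\), so \(H_2\cong\mathbb{Z}^{b_1(b_1-1)/2}\). This uses that \(\mathcal{M}_3=0\), so there are no boundaries to quotient by.
\item \(H_1=\mathcal{M}_1/\operatorname{im}\partial_2=\mathbb{Z}^{b_1}/2\mathbb{Z}^{b_1}\cong\mathbb{Z}_2^{b_1}\).
\end{itemize}
Theorem~\ref{Thm:knudson} then turns these into the stated Borel--Moore groups. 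The case of a tree, \(b_1=0\), recovers the vanishing result already noted.
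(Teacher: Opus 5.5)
Your proposal is correct and is essentially the paper's own proof. Like the paper, you read off the two-term Morse complex from Remark~\ref{remark2}, apply the boundary formulas \eqref{eq:Morse_boundary_1} and \eqref{eq:Morse_boundary_2}, do the linear algebra, and transfer the result via Theorem~\ref{Thm:knudson}.

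The sign check you flag does go through, but not because the swap reverses an edge. With the product orientation, $\partial(e\x e)=\{j\}\x e-\{i\}\x e-e\x\{j\}+e\x\{i\}$. The lower triangle $C_2(e)$ keeps the faces $\{j\}\x e$ and $e\x\{i\}$, both with coefficient $+1$. The swap identifies $e\x\{i\}$ with $\{i\}\x e$ and preserves the orientation along $e$, so $\partial C_2(e)=i\x e+j\x e-D(e)$, and hence $\partial_2 C_2(e)=2(1\x e)$.
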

\begin{proof}
Using \eqref{eq:Morse_boundary_1} and \eqref{eq:Morse_boundary_2}, together with Remark \ref{remark on betti numbers}, we obtain
\begin{align*}
H^{BM}_2(C_2(\g)) 
&\cong \operatorname{Ker}\partial_2,\\
&= \langle  e\x\tilde{e}\,|\,e,\tilde{e}\in \g -T  \text{ and } e \neq \tilde{e}\rangle,\\ 
&\cong\mathbb{Z}^{\frac{b_1(b_1+1)}{2}-b_1},\\ 
&\cong \mathbb{Z}^{\frac{b_1(b_1-1)}{2}}.
\end{align*}
and 
\begin{align*}
H^{BM}_1(C_2(\g)) 
&\cong \frac{\operatorname{Ker}\partial_1}{\operatorname{Im\partial_2}}\, , \\
&=\frac{\langle  1\x{e}\,|e\in \g -T  \rangle}{\langle \pm 2( 1\x{e})\,|e\in \g  - T  \rangle}\, ,\\ 
&\cong \mathbb{Z}_2^{b_1}\, .
\end{align*}

Therefore, we conclude that 
\[
H^{BM}_{n}(C_2(\g)) \cong
\begin{cases}
\mathbb{Z}^{\frac{b_1(b_1-1)}{2}}, & n=2,\\[4pt]
\mathbb{Z}_2^{\,b_1}, & n=1,\\[4pt]
0, & \text{otherwise}.
\end{cases}
\]
\end{proof} 
As an example, we consider the graph given in Figure \ref{fig:G_k_graph}, which is the skeleton of a genus \(k\) orientable surface. Note that it is a simple and connected graph.
\begin{figure}[ht]
\centering

\begin{tikzpicture}[scale=1]

\foreach \i in {1,...,12} {
    \node[circle,fill,inner sep=1.5pt] (v\i) at (\i,0) {};
}

\node[below, font=\tiny] at (1,-0.2) {1};
\node[below, font=\tiny] at (2,-0.2) {2};
\node[below, font=\tiny] at (3,-0.2) {3};
\node[below, font=\tiny] at (4,-0.2) {4};
\node[below, font=\tiny] at (5,-0.2) {5};
\node[below, font=\tiny] at (6,-0.2) {6};
\node[below, font=\tiny] at (7,-0.2) {7};
\node[below, font=\tiny] at (8,-0.2) {8};

\node[below, font=\tiny] at (9,-0.2) {\(4k-3\)};
\node[below, font=\tiny] at (10,-0.2) {\(4k-2\)};
\node[below, font=\tiny] at (11,-0.2) {\(4k-1\)};
\node[below, font=\tiny] at (12,-0.2) {\(4k\)};

\draw (1,0) -- (12,0);

\begin{knot}[clip width=5, consider self intersections=true]

\strand (v1) to[out=60,in=120,looseness=1.2] (v3);
\strand (v2) to[out=60,in=120,looseness=1.2] (v4);

\flipcrossings{1-2}

\end{knot}

\begin{knot}[clip width=5, consider self intersections=true]

\strand (v5) to[out=60,in=120,looseness=1.2] (v7);
\strand (v6) to[out=60,in=120,looseness=1.2] (v8);

\flipcrossings{1-2}

\end{knot}

\begin{knot}[clip width=5, consider self intersections=true]

\strand (v9)  to[out=60,in=120,looseness=1.2] (v11);
\strand (v10) to[out=60,in=120,looseness=1.2] (v12);

\flipcrossings{1-2}

\end{knot}

\draw[white, line width=6pt] (8.3,0) -- (8.7,0);
\node at (8.5,0) {\(\cdots\)};

\node at (2.5,1.2) {\small 1};
\node at (6.5,1.2) {\small 2};
\node at (10.5,1.2) {\small k};
\end{tikzpicture}
\caption{\(G_k\) graph}
\label{fig:G_k_graph}

\end{figure}

The first Betti number for the graph \(G_k\) is \(2k\), therefore, we get:

\[
H^{BM}_{n}(C_2(G_k)) \cong
\begin{cases}
\mathbb{Z}^{{k(2k-1)}}, & n=2,\\[4pt]
\mathbb{Z}_2^{\,2k}, & n=1,\\[4pt]
0, & \text{otherwise}.
\end{cases}
\]

\section{Future Directions}
This work forms the basis of several additional lines of inquiry. For instance, the authors of \cite{BlanchetPalmerShaukat2025} suggest that in many cases, configurations on surfaces may be reduced to configurations on graphs. In such cases, our main result can be applied to compute the Borel--Moore homology of the corresponding graphs. Analogously, our work can be naturally extended to the study of $k$-particle ordered/unordered configuration spaces.

\section*{Acknowledgments}

The author thanks Prof. Christian Blanchet and Prof. Haniya Azam for supervising the Master's thesis, which set out to learn tools for Topological Data Analysis, with Discrete Morse Theory being one of them and from which this work originated. The author thanks HEC for the grant (NRPU Ref No. 20-16557) which provided necessary funding during the Master's thesis. Additionally, the author is grateful to Prof. Blanchet for his continued guidance beyond the completion of the thesis, valuable discussions, and for suggesting a problem that forms the basis of this extension.

\bibliographystyle{abbrv}
\bibliography{reference}
\end{document}